\documentclass[a4paper,11pt]{article}

\usepackage[margin=1in]{geometry}
\usepackage{setspace}
\usepackage{amsmath}
\usepackage{amssymb}
\usepackage{amsthm}
\usepackage{graphicx}
\usepackage{float}
\usepackage{caption}
\usepackage{listings}
\usepackage{tikz}
\usepackage{enumitem}
\usepackage{xcolor}
\usepackage{mathrsfs}

\newtheorem{theorem}{Theorem}[section]
\newtheorem{proposition}[theorem]{Proposition}
\newtheorem{corollary}[theorem]{Corollary}

\newtheorem{lemma}[theorem]{Lemma}

\newtheorem{remark}[theorem]{Remark}

\numberwithin{figure}{section}

\providecommand{\keywords}[1]{\textbf{Keywords:} #1}

\usepackage{hyperref}
\hypersetup{
    pdftitle={On the Rigidity of Analytic Mappings in Complex Analysis and Geometry},
    pdfauthor={Hanwen Liu}
}

\begin{document}

\title{\texorpdfstring{\textbf{On the Rigidity of Analytic Mappings in Complex Analysis and Geometry}}{On the Rigidity of Analytic Mappings in Complex Analysis and Geometry}}
\author{Hanwen Liu}
\date{}

\maketitle

\begin{abstract}
We establish rigidity results for holomorphic mappings and plurisubharmonic functions in complex geometry. 

First, under mild conditions, we show that the gradient of a $\operatorname{U}(1)$-invariant strictly plurisubharmonic function in $\mathbb{C}^2$ possesses finite fibers and induces a analytic mapping of topological degree $1$ on the symplectic quotient. 

Second, we prove that continuous fiber-wise holomorphic maps on proper fibrations elevate to global holomorphic maps when anchored by mutually disjoint sections, yielding rigidity for homomorphisms between elliptic fibrations and Abelian schemes. 

Third, we demonstrate that a fiber-wise holomorphic map of mapping degree $1$ from a fibered compact Kobayashi hyperbolic manifold to a projective variety is a biholomorphism, provided it is injective on a very ample hypersurface. 

Finally, we prove that a holomorphic Lie group action with sufficiently large orbits confines the critical locus of a proper invariant strictly plurisubharmonic function to the fixed-point set, guaranteeing a unique global minimum and yielding a sharp differential topological obstruction on the orbit dimensions of compact Lie group actions.
\end{abstract}

\begin{center}
\keywords{Complex manifold, Symplectic reduction, Kobayashi hyperbolicity, Plurisubharmonic function, Lie group action, Convex optimization}
\end{center}

\tableofcontents
\onehalfspacing
\raggedbottom

\section{Introduction and Background}

The rigid behavior of holomorphic functions and analytic spaces is one of the foundational pillars of complex geometry.
A striking manifestation of this rigidity is the ability to deduce global structures from merely partial, lower-dimensional, or purely topological data.
In 1906, F. Hartogs proved the following fundamental theorem on separate holomorphicity:
\begin{theorem}[F.~Hartogs, \cite{Hartogs}]\label{Hartogs}
    Let $\Delta:=\{z\in\mathbb{C}:|z|<1\}$ be the open unit disc in the complex plane $\mathbb{C}$, and let $f\colon \Delta^n \rightarrow \mathbb{C}$ be an arbitrary function.
    Suppose that for every $(a_1, \dots, a_n) \in \Delta^n$ and every $i \in\{1, \dots, n\}$, the function
    $$g\colon \Delta \rightarrow \mathbb{C}, \quad z\mapsto f(a_1,\dots,a_{i-1},z,a_{i+1},\dots,a_n)$$
    is holomorphic.
    Then $f$ itself is holomorphic.
\end{theorem}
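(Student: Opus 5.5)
The proof goes by induction on $n$, the case $n=1$ being trivial. The strategy is to show that $f$ is holomorphic on some small polydisc and then use a Hartogs-type lemma to fill in the rest. Assume the theorem holds in $n-1$ variables. Write $z=(z',z_n)$ with $z'\in\Delta^{n-1}$. By the inductive hypothesis, for each fixed $z_n$ the function $z'\mapsto f(z',z_n)$ is holomorphic. The key point is that once $f$ is known to be \emph{locally bounded} on an open set, separately holomorphic plus locally bounded implies jointly holomorphic. This is Osgood's lemma: iterate the one-variable Cauchy integral formula on a small closed polydisc. Boundedness lets dominated convergence show that the iterated integral is continuous, and the formula then exhibits $f$ as a convergent power series.

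\textbf{Step 1 (Baire argument).} Fix a closed polydisc $\overline{D'}\times \overline{D_n}\Subset\Delta^n$, and for $M\in\mathbb{N}$ set
$$E_M=\{z_n\in \overline{D_n}: |f(z',z_n)|\le M \text{ for all } z'\in \overline{D'}\}.$$
Each slice $z'\mapsto f(z',z_n)$ is continuous, so it is bounded on $\overline{D'}$, and hence $\bigcup_M E_M=\overline{D_n}$. Each $E_M$ is closed because $z_n\mapsto f(z',z_n)$ is continuous for fixed $z'$. By the Baire category theorem some $E_M$ contains a small disc $\delta$. Then $f$ is bounded on $D'\times\delta$, and Osgood's lemma makes $f$ holomorphic there.

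\textbf{Step 2 (Hartogs' lemma on subharmonic functions).} This is the main obstacle: propagating holomorphy from the thin cylinder $D'\times\delta$ to all of $D'\times D_n$. Recentre so that $\delta$ is a disc about $0$, shrinking $D'$ and $D_n$ if needed, and expand
$$f(z',z_n)=\sum_k a_k(z') z_n^k, \qquad a_k(z')=\frac{1}{k!}\,\partial_{z_n}^k f(z',0).$$
By Step 1 each $a_k$ is holomorphic on $D'$. The functions $u_k=\frac1k\log|a_k|$ are plurisubharmonic, and they are locally uniformly bounded above, using the Cauchy estimates on $D'\times\delta$. For each fixed $z'$ the series converges for $|z_n|<R$, where $R$ is the radius of $D_n$, because $z_n\mapsto f(z',z_n)$ is holomorphic there. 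So $\limsup_k u_k(z')\le -\log R$ pointwise.

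Hartogs' lemma then applies: a family of subharmonic functions that is locally uniformly bounded above, with pointwise $\limsup\le C$, satisfies, for every $\varepsilon>0$ and compact $K$, the bound $u_k\le C+\varepsilon$ on $K$ for all large $k$. I would prove this by the sub-mean-value inequality combined with Fatou's lemma on a covering of $K$ by small balls. This gives uniform convergence of the series on compacts of $D'\times D_n$, so $f$ is holomorphic there.

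Since the polydisc was arbitrary, $f$ is holomorphic on all of $\Delta^n$. The delicate part is Step 2, specifically ensuring the upper bound on $u_k$ holds near the initial disc $\delta$ uniformly. I would first try the standard device of running the argument in a slightly smaller polydisc and re-centering.
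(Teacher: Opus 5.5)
The paper does not prove this statement. Theorem~\ref{Hartogs} is quoted as classical background with a citation to Hartogs, so there is no argument in the paper to compare yours against. Your proposal is the standard proof, essentially Hartogs' original argument in the form H\"ormander presents it, and it is correct. The steps are:
\begin{enumerate}
\item induction on $n$;
\item a Baire category argument in the last variable, giving boundedness on $D'\times\delta$;
\item Osgood's lemma, giving joint holomorphy there;
\item Hartogs' lemma for the plurisubharmonic functions $\frac1k\log|a_k|$, which pushes the radius of convergence in $z_n$ from that of $\delta$ to the full disc.
\end{enumerate}
Your sketches of Osgood's lemma (dominated convergence at each level of the iterated Cauchy integral) and of Hartogs' lemma (sub-mean-value inequality plus reverse Fatou on a cover of $K$) both go through.

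One correction to your closing paragraph: the uniform upper bound on $u_k$ is not the delicate point. You defined $E_M$ with the supremum over all of $\overline{D'}$. The Cauchy estimates on $D'\times\delta$ therefore give $u_k\le \frac1k\log^+M-\log\rho$ on all of $D'$ at once, where $\rho$ is the radius of $\delta$. This is exactly why the Baire argument is run in $z_n$ rather than in $z'$, and it means no shrinking of $D'$ is needed.

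What does need a line of care is the recentring. The disc $\delta$ may sit anywhere in $\overline{D_n}$, so you must check that the disc about its centre $c$ on which you run Step~2 still contains the point you are after. Since $z_n\mapsto f(z',z_n)$ is holomorphic on all of $\Delta$, you may take that disc to be $\{|z_n-c|<1-|c|\}$. Fix a target point $p=(p',p_n)$, take $D'$ a polydisc about $p'$, and take $\overline{D_n}=\{|z_n-p_n|\le\eta\}$ with $\eta<(1-|p_n|)/2$. Then $|c-p_n|\le\eta<1-|c|$, so $p$ lies in the open set $D'\times\{|z_n-c|<1-|c|\}$ where $f$ has been shown holomorphic. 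With this choice, your final sentence ``the polydisc was arbitrary'' becomes a correct conclusion.
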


Further results exploring this direction are then established, including Forelli's theorem~\cite{Forelli} for spherical harmonics and Hwang-Mok theorems~\cite{HwangMok1998}\&\cite{HwangMok2005} on Fano varieties. The evolution of these foundational principles has culminated in profound global rigidity results within the setting of higher-dimensional K\"ahler geometry. A central paradigm in this development is Siu’s strong rigidity theorem:

\begin{theorem}[Y-T.~Siu, \cite{Siu}]\label{Siu}
Let $M$ be a compact K\"ahler manifold of complex dimension $n\geq2$ and let $Q$ be a compact quotient of an irreducible bounded symmetric domain of rank greater than $1$. If $M$ is homotopy equivalent to $Q$, then $M$ is biholomorphic or conjugate biholomorphic to $Q$.
\end{theorem}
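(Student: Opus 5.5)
The plan follows Siu's original harmonic-map strategy. First, I will produce a harmonic representative of the homotopy equivalence, then prove it is pluriharmonic, and finally exploit the curvature rigidity of $Q$ to upgrade pluriharmonicity to (anti)holomorphicity.

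\emph{Existence.} Let $h\colon M\to Q$ be the homotopy equivalence. The target $Q=\Gamma\backslash D$ is a compact locally symmetric space of noncompact type, so it carries a Riemannian metric of nonpositive sectional curvature. The Eells--Sampson theorem then deforms $h$ to a harmonic map $f\colon M\to Q$ in the same homotopy class. Since $f\simeq h$, the map $f$ is still a homotopy equivalence; in particular it has degree $\pm1$, which I will use later.

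\emph{Pluriharmonicity.} Apply Siu's $\partial\bar\partial$-Bochner technique to the $(1,0)$-part $\bar\partial f$ of the differential. Consider the $(n-2,n-2)$-form $\omega^{n-2}$ built from the Kähler form of $M$, integrate $\partial\bar\partial\{\bar\partial f,\bar\partial f\}\wedge\omega^{n-2}$ over $M$, and use Stokes' theorem. This yields
\begin{equation*}
0=\int_M |\nabla''\bar\partial f|^2\,dV+\int_M R\bigl(\partial f,\bar\partial f,\overline{\partial f},\overline{\bar\partial f}\bigr)\,dV .
\end{equation*}
The curvature tensor of $D$ is \emph{strongly} seminegative, so both integrands have a sign. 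Hence each integrand vanishes pointwise, which shows that $f$ is pluriharmonic and that its complexified differential lands in a curvature-null subspace. The hypothesis $n\geq 2$ is used exactly here, since the form $\omega^{n-2}$ must make sense.

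\emph{Rigidity.} This is the main obstacle. Let $q$ be a point where $df$ has maximal real rank; because $f$ has degree $\pm1$, that rank is $2n$. The image $df(T^{1,0}_qM)$ is then a complex subspace of $T_{f(q)}Q\otimes\mathbb{C}$ of dimension $n$ on which the curvature vanishes in Siu's sense, meaning it is annihilated by $R(\cdot,\bar\cdot,\cdot,\bar\cdot)$ against its conjugate. Siu's algebraic lemma for irreducible bounded symmetric domains of rank $\geq2$ (excluding the rank-one ball case) states that a curvature-null subspace of complex dimension $n=\dim_{\mathbb{C}}Q$ must be either $T^{1,0}$ or $T^{0,1}$. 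I would prove this by case analysis over the Cartan classification, using the root structure and the bound on the dimension of maximal null subspaces. This bound is exactly where rank $>1$ enters, and it is the delicate, case-by-case part of the argument.

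\emph{Conclusion.} The lemma makes $f$ holomorphic or antiholomorphic on an open set, and by analyticity (pluriharmonicity plus unique continuation) it is so on all of $M$. A holomorphic map of degree $\pm1$ between compact complex manifolds of the same dimension is bimeromorphic. Since the branch divisor is empty and $Q$ is hyperbolic with ample canonical bundle, $f$ is in fact unramified. An unramified map of degree one is biholomorphic, and in the antiholomorphic case one obtains a conjugate biholomorphism.
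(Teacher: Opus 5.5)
The paper does not prove this theorem. It quotes it from Siu as background, so the only available comparison is with Siu's original argument. Your outline follows that argument's structure: Eells--Sampson, the $\partial\bar\partial$-Bochner formula, an algebraic curvature lemma at a point of maximal rank, and analytic continuation. The genuine gap is in your final step. For a holomorphic map of degree one onto a manifold, the branch locus in $Q$ always has codimension at least two. So ``the branch divisor is empty'' carries no information. Neither hyperbolicity of $Q$ nor ampleness of $K_Q$ implies that $f$ is unramified.

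A concrete counterexample: the blow-up $\mathrm{Bl}_pQ\to Q$ at a point is a holomorphic map of degree one from a compact K\"ahler manifold onto $Q$. Its Jacobian vanishes along the exceptional divisor, so it is ramified. The rational curves in an exceptional locus lie in $M$ and are contracted, so no property of the target can exclude them.

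What excludes them is the homotopy-equivalence hypothesis, which you stop using after the degree computation. Note that $\mathrm{Bl}_pQ$ has larger $b_2$ than $Q$. Suppose $f$ contracted a compact subvariety $Z\subset M$ of dimension $k\geq 1$ to a point. For a K\"ahler form $\omega_M$, $\int_Z\omega_M^k>0$ gives $[Z]\neq 0$ in $H_{2k}(M;\mathbb{R})$, while $f_*[Z]=0$. This contradicts $f_*$ being an isomorphism. Hence $f$ has finite fibres. A finite holomorphic map of degree one onto the smooth, hence normal, manifold $Q$ is bijective, hence biholomorphic. The antiholomorphic case follows by conjugating the complex structure. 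Alternatively, $\pi_2(M)\cong\pi_2(Q)=0$ makes every rational curve in the K\"ahler manifold $M$ null-homologous, which is impossible. Since the exceptional locus of a bimeromorphic morphism onto a smooth target is covered by rational curves, it is then empty.

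Three smaller points.

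\emph{Rank hypothesis.} Rank $>1$ is not what drives the algebraic lemma. The statement that an $n$-dimensional subspace $V$ with $[V,V]=0$ and $V\oplus\overline{V}=T_{f(q)}Q\otimes\mathbb{C}$ must equal $T^{1,0}$ or $T^{0,1}$ holds for every irreducible domain of dimension at least $2$. That includes the ball, which is Siu's strongly negative case. What matters is irreducibility and $\dim\geq 2$; the rank hypothesis in the statement only picks out a special case.

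\emph{Role of harmonicity.} In the Bochner step you should say where harmonicity enters. It makes the $(1,1)$-form $D'\bar\partial f$ trace-free, i.e.\ primitive. Wedging with $\omega^{n-2}$ then gives it a definite sign by the Hodge--Riemann relations. Without this the first integrand is not a square, and you cannot conclude that each term vanishes.

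\emph{The algebraic lemma.} This lemma is asserted and deferred to a case analysis, so the real content of the theorem is being taken from the citation rather than proved.
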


This result demonstrates that the underlying harmonic maps, constrained by the Bochner-type identity and the curvature of the target, must collapse to global holomorphic isomorphisms. Such phenomena suggest that the presence of negative curvature, or more generally, the absence of rational curves, serves as a geometric anchor that rigidifies the space of mappings.

In accordance with the spirit of rigidity exemplified by Hartogs' theorem and Siu's rigidity theorem, this paper investigates how global geometric and analytic structures can be enforced by partial algebraic or differential constraints.
We divide our exploration into three distinct but conceptually related results concerning the rigidity of mappings.

In Section 2, we explore rigidity within the realm of pluripotential theory and symplectic geometry.
We consider the real analytic gradient mappings of strictly plurisubharmonic functions.
Without assuming global real convexity, the gradient mapping of a strictly pseudoconvex potential is not guaranteed to be injective.
However, we demonstrate that the presence of a global $\operatorname{U}(1)$-symmetry, coupled with a phase condition, restricts the gradient fibers.
We prove that in $\mathbb{C}^2$, the fibers are strictly finite.
We then elevate this local differential property to a global symplectic rigidity theorem, proving that the Marsden-Weinstein symplectic quotient of the moment map perfectly filters out the local pseudoconvex anomalies, yielding a proper mapping of topological degree exactly 1.

In Section 3, we turn our attention to the global extension of fiber-wise holomorphic mappings.
While Hartogs' theorem guarantees the extension of holomorphic functions across coordinates, elevating a continuous, fiber-wise holomorphic mapping to a globally holomorphic one on a proper fibration requires navigating the global geometry of the ambient space.
We demonstrate that by anchoring the mapping to a sequence of holomorphic sections with mutually disjoint images, one can exploit the Noetherian nature of the local rings of the relative morphism space.
This algebraic stabilization forces the mapping to be globally holomorphic, a rigidity result we apply to homomorphisms between elliptic fibrations and Abelian schemes.

In Section 4, we transition to the rigidity of proper holomorphic mappings on hyperbolic spaces.
We present a rigidity theorem demonstrating how continuous, fiber-wise analytic mappings can be automatically elevated to global bi-holomorphic isomorphisms when anchored by a transverse algebraic intersection, specifically a very ample hypersurface, on a Kobayashi hyperbolic manifold.

In Section 5, we present a digression that bridges the pluripotential rigidity from Section 2 with the applied theory of convex optimization. We analyze the optimization landscape of a strictly convex objective function defined on an ambient complex Euclidean space when constrained to a closed complex submanifold. Such a restriction naturally weakens strict convexity to strict plurisubharmonicity. We demonstrate that imposing a holomorphic Lie group action with sufficiently large orbits rigidly forces the critical locus into the fixed-point set, ensuring the existence of a unique global minimum. This optimization theoretic approach unexpectedly provides a sharp geometric obstruction concerning the allowed orbit dimensions for compact Lie group actions on Stein manifolds.

Finally, in Appendix~A, we establish the strict convergence of the associated K\"ahler gradient flow, providing the continuous dynamical foundation necessary for algorithmic implementation.

\section{Pluripotential Theory and Applications to Moment Maps}

In this section, we investigate the geometric rigidity of the gradient mapping associated with a strictly plurisubharmonic function.
If a differentiable function is strictly convex, the theory of monotone operators dictates that its gradient map is a homeomorphism.
However, for strictly plurisubharmonic functions, the lack of convexity permits the level sets to develop folds, locally breaking injectivity.
We establish that imposing a global $\operatorname{U}(1)$-symmetry, combined with the condition that equal gradients imply equal function values, imposes severe dimensional constraints on the gradient fibers.

We first suggest a motivating observation, where strict subharmonicity coincides with the strict positivity of the radial derivative, preserving global injectivity. The following result is standard; however, to fix notations and familiarize the reader with the techniques used later, we provide a detailed proof.

\begin{lemma}\label{d=n}
Let $B$ be the unit ball in $\mathbb{R}^n$, and $u\colon B\rightarrow\mathbb{R}$ a strictly subharmonic $C^2$ function.
Suppose that $u$ is rotation invariant, and for any $x,y\in B$, the equality $\nabla u(x)=\nabla u(y)$ implies $u(x)=u(y)$.
Then, the gradient $\nabla u\colon B\rightarrow\mathbb{R}^n$ is a homeomorphism onto its image.
\end{lemma}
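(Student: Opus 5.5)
Write $u(x)=f(|x|)$ for a function $f\colon[0,1)\to\mathbb{R}$; this is possible by rotation invariance. Since $u\in C^2$, $f$ is $C^2$ on $(0,1)$. On $B\setminus\{0\}$ the gradient is
\[
\nabla u(x)=f'(r)\,\frac{x}{r},\qquad r=|x|,
\]
and $\nabla u(0)=0$ because $u$ is even and differentiable at the origin. The plan is to show that $\nabla u$ is injective and then invoke Brouwer's invariance of domain. The map $\nabla u\colon B\to\mathbb{R}^n$ is continuous and $B$ is open in $\mathbb{R}^n$, so injectivity suffices: the map is then open, hence a homeomorphism onto its (open) image.

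The key step is to show that $f'(r)>0$ for all $r\in(0,1)$, using strict subharmonicity. In polar form,
\[
\Delta u = f''(r)+\frac{n-1}{r}f'(r)=r^{1-n}\bigl(r^{n-1}f'(r)\bigr)'>0 \quad\text{for } 0<r<1 .
\]
Hence $g(r):=r^{n-1}f'(r)$ is strictly increasing on $(0,1)$. Moreover $g(r)\to0$ as $r\to0^+$, since $f'(r)=\langle\nabla u(re_1),e_1\rangle\to \langle\nabla u(0),e_1\rangle=0$. Therefore $g>0$, and so $f'>0$ on $(0,1)$. This also covers $n=1$, where $u$ is even, $f''>0$ and $f'(0)=0$. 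This is the step that needs the most care, namely the boundary behaviour at $r=0$; the continuity of $\nabla u$ at the origin handles it. It also shows that $\nabla u(x)=0$ only at $x=0$.

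Now suppose $\nabla u(x)=\nabla u(y)$. If either of $x,y$ is $0$, the common value is $0$, so both are $0$ by the previous step. Otherwise, write $r=|x|$ and $s=|y|$. Comparing directions gives $x/r=y/s$, since both coefficients $f'(r),f'(s)$ are positive. Comparing lengths gives $f'(r)=f'(s)$.

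It remains to rule out $r\neq s$, and this is exactly where the hypothesis on $u$ enters. If $r\neq s$, the hypothesis gives $u(x)=u(y)$, i.e.\ $f(r)=f(s)$. But, say for $r<s$,
\[
f(s)-f(r)=\int_r^s f'(t)\,dt>0,
\]
a contradiction. Hence $r=s$, and then $x=y$. So $\nabla u$ is injective, and invariance of domain finishes the proof.
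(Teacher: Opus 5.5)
Your proof is correct and follows the paper's route. You get $f'>0$ from $(r^{n-1}f')'>0$ together with vanishing at the origin, then equal norms from the hypothesis plus strict monotonicity of $f$, and finish with invariance of domain. The only difference is cosmetic: you conclude $x=y$ by comparing directions in $\nabla u(x)=f'(r)x/r$ directly, whereas the paper takes a rotation $R$ with $y=Rx$ and uses the equivariance $\nabla u(Rx)=R\nabla u(x)$; your justification that $r^{n-1}f'(r)\to 0$ as $r\to 0^+$ is also slightly more careful than the paper's.
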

\begin{proof}
Since $u$ is $\operatorname{SO}(n)$-invariant, it is a purely radial function.
We may write $u(x) = f(r)$, where $r = \|x\|$ and $f\colon [0,1) \rightarrow \mathbb{R}$ is $C^2$ continuously differentiable.
The strictly subharmonic condition $\Delta u > 0$ expressed in spherical coordinates yields
$$
\frac{1}{r^{n-1}} \frac{d}{dr} ( f'(r)r^{n-1} ) > 0
$$
for all $r>0$.
Since $u \in C^2(B)$ is rotationally invariant, its differential at the origin vanishes, ensuring $f'(0) = 0$.
Multiplying by $r^{n-1} > 0$ and integrating from $0$ to $r$, we obtain that $ f'(r)r^{n-1} > 0$ for all $r > 0$.
Consequently, the radial derivative satisfies $f'(r) > 0$ for all $r > 0$, which ensures that $u$ is strictly monotonically increasing with respect to the norm $\|x\|$.
This is a direct manifestation of the strong maximum principle, as a strictly subharmonic function cannot achieve a local maximum in the interior of the domain.

To establish injectivity, assume there exist $x,y \in B$ such that $\nabla u(x) = \nabla u(y)$.
By hypothesis, this equality implies $u(x) = u(y)$.
Because $u$ is strictly monotonically increasing with respect to the norm, the equality $f(\|x\|) = f(\|y\|)$ strictly forces $\|x\| = \|y\|$.

Let $r = \|x\| = \|y\|$. If $r = 0$, then $x = y = 0$, and the injectivity holds.
If $r > 0$, the points $x$ and $y$ lie on the same sphere centered at the origin, meaning there exists a matrix $R \in \operatorname{SO}(n)$ such that $y = Rx$.
By the $\operatorname{SO}(n)$-invariance, we have $u(Rx) = u(x)$ for all $R \in \operatorname{SO}(n)$.

Differentiating this identity $u(Rx) = u(x)$ with respect to the spatial variable $x$ yields the gradient equivariance condition
$$
\nabla u(Rx) = R \nabla u(x).
$$
Evaluating this at the specific rotation $R$, we obtain $\nabla u(y) = R \nabla u(x)$.
Substituting the initial assumption $\nabla u(x) = \nabla u(y)$ into this equivariance relation yields
$\nabla u(x) = R \nabla u(x)$,
and thus 
$$ (I - R) \nabla u(x) = 0,$$
where $I$ denotes the identity matrix.
Because $r > 0$ and $f'(r) > 0$, the gradient evaluating to 
$$
\nabla u(x) = f'(r)\frac{x}{r}
$$ 
is non-zero.
Substituting this into the prior equation yields $$(I - R) ( f'(r)x/r ) = 0.$$
Because $f'(r)/r > 0$, we deduce $(I - R)x = 0$, which implies $Rx = x$ and forces $y = x$.
Therefore, the mapping $\nabla u\colon B \rightarrow \mathbb{R}^n$ is injective.
Because $\nabla u$ is a continuous injective mapping from an open subset of $\mathbb{R}^n$ to $\mathbb{R}^n$, Brouwer's invariance of domain theorem guarantees that $\nabla u$ is a homeomorphism onto its image.
\end{proof}

However, in higher dimensions, the gradient map of a strictly plurisubharmonic function may fail to be a homeomorphism.
Nevertheless, by coupling the $\operatorname{U}(1)$-orthogonality with the maximal principles inherent to complex analysis, we can rigidly trap the gradient fibers inside an affine complex plane.
In the setting of $\mathbb{C}^2$, this geometric constraint is sufficient to strictly bound the real dimension of the fibers to zero.

\begin{proposition}\label{d=2}
Let $\Omega$ be a $\operatorname{U}(1)$-invariant domain in $\mathbb{C}^2$ containing the origin, and $u\colon\Omega\rightarrow\mathbb{R}$ a proper strictly plurisubharmonic real analytic function.
Suppose that $u$ is $\operatorname{U}(1)$-invariant, and for any $z,w\in\Omega$ the equality $\nabla u(z)=\nabla u(w)$ implies $u(z)=u(w)$.
Then, the complex gradient $\nabla u\colon\Omega\rightarrow\mathbb{C}^2$ possesses finite fibers.
\end{proposition}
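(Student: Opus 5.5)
Fix $p\in\Omega$ with value $v:=\nabla u(p)$, and let $F:=\{z\in\Omega:\nabla u(z)=v\}$ be its fiber. The plan is to show that $F$ is a compact real analytic set of real dimension $0$, so that it is finite.

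\textbf{Step 1: $F$ is a compact real analytic set.} Since $u$ is real analytic, $F$ is a real analytic subset of $\Omega$. By hypothesis $u$ is constant on $F$, say $u\equiv c$. Thus $F\subset u^{-1}(c)$, which is compact because $u$ is proper. So $F$ is a compact real analytic set. It therefore has finitely many irreducible components, and it suffices to show that $F$ contains no real analytic arc.

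\textbf{Step 2: use the $\operatorname{U}(1)$-symmetry.} Write the action as $z\mapsto e^{i\theta}z$, and let $\xi(z)=iz$ be its generator. Invariance of $u$ gives $du(\xi)=0$, i.e.\ $\operatorname{Re}\langle \nabla u(z), iz\rangle=0$. Differentiating $u(e^{i\theta}z)=u(z)$, exactly as in Lemma~\ref{d=n}, gives the equivariance $\nabla u(e^{i\theta}z)=e^{i\theta}\nabla u(z)$.

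So for $z,w\in F$ we have $\operatorname{Re}\langle v,iz\rangle=\operatorname{Re}\langle v,iw\rangle=0$. If $v\neq0$, this places $F$ inside the real hyperplane $H_v:=\{z:\operatorname{Im}\langle z,v\rangle=0\}$, with the Hermitian-product convention fixed accordingly. The equivariance also shows that $F$ meets each orbit in at most one point: if $e^{i\theta}z\in F$ with $z\in F$, then $e^{i\theta}v=v$, forcing $e^{i\theta}=1$.

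If $v=0$, then $F$ is the critical set of $u$ at level $c$. Here I would use that $u$ is strictly plurisubharmonic and exhaustive, so the set of minima cannot contain a complex curve. More decisively, points of $F$ are critical points of $u$ at the common value $c$. On a connected component of $F$, consider $u$ restricted to the complex line through two nearby points, and argue as below.

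\textbf{Step 3: rule out arcs.} This is the main obstacle. Suppose $\gamma(t)$ is a nonconstant real analytic arc in $F$. Along $\gamma$ we have $\nabla u\equiv v$, so $\frac{d}{dt}\nabla u(\gamma(t))=\operatorname{Hess}_{\mathbb{R}}u(\gamma)\dot\gamma=0$. Hence $\dot\gamma$ lies in the kernel of the real Hessian $D^2u$.

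Strict plurisubharmonicity gives $D^2u(X,X)+D^2u(JX,JX)=4\,\partial\bar\partial u(X,\bar X)>0$. For $X=\dot\gamma$ in the kernel this yields $D^2u(J\dot\gamma,J\dot\gamma)>0$. On the other hand, $\dot\gamma$ is tangent to $H_v$, and since $F\subset u^{-1}(c)$ it is also tangent to the level set, where it satisfies $\langle v,\dot\gamma\rangle_{\mathbb{R}}=0$.

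In $\mathbb{C}^2$ (real dimension $4$), the constraints on $\dot\gamma$ are:
\begin{itemize}
\item $\operatorname{Re}\langle v,\dot\gamma\rangle=0$ (tangency to the level set);
\item $\operatorname{Re}\langle v,i\dot\gamma\rangle=0$ (tangency to $H_v$).
\end{itemize}
These say exactly that $\dot\gamma$ lies in the complex line $v^{\perp_{\mathbb{C}}}$, which is the ``affine complex plane'' trap mentioned in the paper. I would then consider the complex line $L=\gamma(t_0)+\mathbb{C}\dot\gamma(t_0)$. Analyticity of $F$, together with the $J$-invariance of the directions allowed in $v^{\perp_{\mathbb{C}}}$, should force $F$ near $\gamma$ to lie in $L$.

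On $L$, the function $h(z)=u(z)-\operatorname{Re}\langle v,z\rangle$ is strictly subharmonic. Its critical set contains $\gamma\subset L\cap F$, and all these critical points have equal critical value, because $u$ equals $c$ and the linear term is constant along the arc by tangency. But for a strictly subharmonic function of one complex variable, the Hessian has positive trace, so critical points are nondegenerate in at least one direction. A curve of critical points with constant value would make $h$ attain its maximum over a continuum of critical points. I would try to contradict this via the maximum principle on a small disc in $L$ bounded partly by $\gamma$, or via the index: such a curve of critical points must be a curve of local minima, and $h-c$ would then be nonnegative near $\gamma$ and vanish on it, which is incompatible with $\Delta h>0$ only after additional argument.

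Making this final contradiction rigorous, and justifying that the arc stays in a complex line, is where I expect the real difficulty. Once arcs are excluded, $F$ is $0$-dimensional and compact, hence finite.
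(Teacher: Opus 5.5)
Your Steps 1 and 2 match the paper. Confining the arc to a complex line is in fact immediate: $\dot\gamma(t)$ lies in the fixed complex line $v^{\perp_{\mathbb{C}}}$ for every $t$, so integrating gives $\gamma\subset L:=\gamma(t_0)+v^{\perp_{\mathbb{C}}}$. Because $v\perp L$, the gradient of $\varphi:=u|_{L}$ vanishes along $\gamma$.

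The genuine gap is the final contradiction. None of the local routes you suggest can produce it, because the configuration along $\gamma$ is locally consistent. For example, $\varphi(\zeta)=(|\zeta|^2-1)^2$ satisfies $\Delta\varphi=16|\zeta|^2-8>0$ near $|\zeta|=1$. The unit circle is a real-analytic curve of critical points at the constant value $0$, with Hessian degenerate along it and positive across it, which is exactly what you derived along $\gamma$. Such points are local minima across $\gamma$, not maxima. A maximum principle on a small disc bounded only partly by $\gamma$ gives nothing, since the rest of its boundary is uncontrolled. What fails in the example is only that $\varphi$ is not subharmonic on the whole disc enclosed by the circle, so the contradiction must be global.

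That global step is the missing idea, and it is how the paper concludes. The set $F\cap L$ is a compact real-analytic subset of $L$ containing $\gamma$. It is at most one-dimensional, since it lies in the critical set of the strictly subharmonic $\varphi$. By Sullivan's theorem every point of it has an even number of half-branches, so it is an Eulerian graph and contains a closed cycle $C$ on which $\varphi\equiv c$ and $\nabla\varphi\equiv 0$. Let $U$ be the region of $L$ enclosed by $C$. Once $\bar U\subset\Omega$, the strong maximum principle gives $\varphi<c$ on $U$. Hopf's lemma then gives $\partial\varphi/\partial n>0$ at smooth points of $C$, contradicting $\nabla\varphi=0$ there.

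The inclusion $\bar U\subset\Omega$ is trivial for $\Omega=\mathbb{C}^2$. In general the paper appeals to the Stein property of $\Omega$. One clean way to make this precise: a pseudoconvex circular domain containing $0$ is balanced, hence Runge, so $\bar U$ lies in the polynomial hull of $C$, which is contained in $\Omega$.

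Your case $v=0$ is also left open, and Step 3 cannot handle it, since both linear constraints become vacuous. The paper treats it separately. By $\operatorname{U}(1)$-invariance, $\zeta\mapsto u(\zeta z)$ is radial and strictly subharmonic on $\{|\zeta|\le 1\}$ (again using that $\Omega$ is balanced). As in Lemma~\ref{d=n}, its radial derivative is then positive, i.e.\ $\langle\nabla u(z),z\rangle>0$ for $z\neq 0$, so the zero fiber is $\{0\}$. This is the same closed-curve mechanism, with the $\operatorname{U}(1)$-orbit of a putative critical point playing the role of $C$.
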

\begin{proof}
Fix a constant vector $c \in \mathbb{C}^2$.
We aim to show that the fiber $M := \{z \in \Omega \mid \nabla u(z) = c\}$ is a finite set.
If $M$ is empty, the result holds trivially. Assume $M$ is non-empty.
Since $u$ is continuous differentiable and proper, any level set of $u$ is compact.
For any $z, w \in M$, the hypothesis $\nabla u(z) = \nabla u(w)$ implies $u(z) = u(w)$.
Thus, the function $u$ evaluates to a constant ${b}\in\mathbb{R}$ on $M$, ensuring $M \subseteq u^{-1}({b})$.
Because $M$ is a closed subset of a compact set, we obtain $M$ itself is compact.
Furthermore, because $u$ is real analytic, the gradient mapping $\nabla u$ is also real analytic.
Consequently, we have that $M$ is a compact real analytic subvariety of $\Omega$.

First, suppose $c = 0$.
For any $z \in \Omega \setminus \{0\}$, the restriction of $u$ to the complex line spanned by $z$ is a purely radial strictly subharmonic function.
The strict subharmonicity forces its radial derivative to be strictly positive, implying $\langle \nabla u(z), z \rangle > 0$.
Thus, $\nabla u(z) \neq 0$ for all $z \neq 0$.
Consequently, if $c = 0$, the fiber $M$ is contained in $\{0\}$ and is trivially finite.
We henceforth assume $c \neq 0$.

If $\dim(M) = 0$, the compactness of $M$ immediately forces it to be a finite set of points.
Assume, for the sake of contradiction, that $\dim(M) \ge 1$.
Then, the regular locus of $M$ contains a smooth 1-dimensional real analytic curve $\Sigma$.
Take any $v \in T_z\Sigma$, and denote by $\langle \cdot, \cdot \rangle$ the Euclidean inner product on $\mathbb{R}^{4}$.
Because $u$ evaluates to the constant $b$ on the entirety of $M$, and $\Sigma \subseteq M$, the scalar function $u$ is constant on $\Sigma$. Consequently, the differential $du$ vanishes on the tangent space $T_z\Sigma$, meaning any tangent vector is orthogonal to the gradient $\nabla u(z) = c$. That is,
\begin{equation}\label{eq:grad_orth}
    \langle c, v \rangle = 0.
\end{equation}
By the $\operatorname{U}(1)$-invariance, we have $u(e^{it}z) = u(z)$ for all $t \in \mathbb{R}$.
Differentiating this identity with respect to $t$ at $t=0$ yields
$$
\langle \nabla u(z), Jz \rangle = 0,
$$
where $J$ denotes the standard complex structure on $\mathbb{C}^2$.
Evaluating on the fiber $M$, we obtain $\langle Jz ,c \rangle = 0$, which is equivalent to $\langle Jc, z \rangle = 0$.
Taking the directional derivative along the tangent vector $v$ of $\Sigma$ then yields
\begin{equation}\label{eq:Jgrad_orth}
    \langle Jc, v \rangle = 0.
\end{equation}

Let $W := \operatorname{span}_{\mathbb{R}}\{ Jc,c\}$.
Because $J$ is a standard complex structure, the real span $W$ is invariant under $J$, forming a $1$-dimensional complex subspace of $\mathbb{C}^2$.
Let $W^\perp$ denote the orthogonal complement of $W$, which is also a $1$-dimensional complex subspace.
Equations (\ref{eq:grad_orth}) and (\ref{eq:Jgrad_orth}) dictate that $c \perp v$ and $Jc \perp v$, meaning $v \in W^\perp$.
Since the tangent vector of $\Sigma$ is globally confined to the fixed complex line $W^\perp$, the entire connected real curve $\Sigma$ is contained within a single affine complex plane ${L}$ parallel to $W^\perp$.
We now examine the restriction of $u$ to this affine complex plane ${L}$.
Define $M_L := M \cap L$. Because $M$ is a compact real analytic subvariety of $\Omega$, the closed intersection $M_L$ is a compact real analytic subvariety of the $2$-dimensional real affine plane $L$, and it contains $\Sigma$. 

Let $\varphi$ be the restriction of $u$ on $L$. Because $u$ is strictly plurisubharmonic on $\Omega$, its restriction $\varphi$ is a strictly subharmonic function on the intersection ${L} \cap \Omega$.
Because $M_L$ is a compact real analytic variety, Sullivan's Theorem~\cite{Sullivan} dictates that the local Euler characteristic of the link of any point in $M_L$ is even.
This ensures $M_L$ decomposes into Eulerian graphs, thereby containing a closed cycle $C$.
Since $C$ is a subset of $M_L \subseteq M$, the function evaluates to $\varphi = {b}$ on the entirety of $C$.
Let $U$ denote the bounded domain enclosed by the cycle $C$ within the affine plane $L \cong \mathbb{C}$.
Because $u$ is a proper strictly plurisubharmonic function on $\Omega$, it constitutes an exhaustion function, ensuring that $\Omega$ is a Stein manifold.
By the pseudoconvexity of Stein manifolds, the holomorphically convex hull of the compact set $C$ with respect to $\Omega$ is compact.
Since $C$ bounds the domain $U$ in the affine plane $L$, the maximum modulus principle dictates that $U$ is entirely contained within this holomorphically convex hull.
Consequently, the closure of $U$ is a compact subset of $\Omega$, guaranteeing the strict inclusion $U \subseteq \Omega$.
By the strong maximum principle for subharmonic functions, the restricted subharmonic function $\varphi$ cannot attain an interior maximum.
Therefore, on the bounded domain $U$, we must have $\varphi|_U < {b}$.
Consequently, $\varphi$ attains its maximum on the boundary $C$. By the Hopf Lemma~\cite{GilbargTrudinger}, the outward normal derivative of $\varphi$ at any smooth point on $C$ must be positive, namely,
$$
\frac{\partial \varphi}{\partial n} > 0
$$
almost everywhere.
However, the full gradient of $u$ at any point $z\in C$ is $\nabla u (z)= c$.
Because ${L}$ is parallel to $W^\perp$, and $c \in W$, the vector $c$ is orthogonal to the plane ${L}$.
Therefore, the projection of the gradient onto ${L}$ is identically zero.
This implies the normal derivative $\partial \varphi/\partial n$ is exactly $0$ everywhere on $C$, which contradicts the Hopf Lemma.
Therefore, we have $\dim(M) \le 0$. The proof is completed.
\end{proof}

\begin{remark}
The finite fiber property established in Propersition~\ref{d=2} is a low-dimensional phenomenon strictly confined to $2$-dimensional Kähler geometry. For general $d \ge 3$, the dimension counting in the proof inherently breaks down. Specifically, the orthogonal complement the complex line spanned by the gradient possesses complex dimension $d-1\geq2$ when $d \ge 3$, which can no longer trigger the Hopf Lemma contradiction.
\end{remark}

We now elevate this local finiteness property to a global topological rigidity result using the framework of symplectic geometry.
The strict plurisubharmonicity of the potential $\phi$ induces a Kähler metric, and the global $\operatorname{U}(1)$-action is generated by a real-valued Hamiltonian function known as the moment map.
By performing a Marsden-Weinstein symplectic reduction \cite{MarsdenWeinstein} over the level sets of the moment map, we translate the gradient mapping into an induced map between Riemann spheres.

\begin{corollary}\label{Moment}
Let $\phi\colon\mathbb{C}^2\rightarrow\mathbb{R}$ be a uniformly strictly plurisubharmonic real analytic function, and $\mu$ the moment map of $dd^c\phi$.
Suppose that $\phi$ is $\operatorname{U}(1)$-invariant, and $dd^c\phi \ge \varepsilon \omega$ for some $\varepsilon > 0$, where $\omega$ is the standard Kähler form of $\mathbb{C}^2$.
Then, for any regular value $b\in\mu(\mathbb{C}^2)$, the real analytic mapping
$$
f\colon\mu^{-1}(b)/\operatorname{U}(1)=\mathbb{P}^1\rightarrow\mathbb{P}^1,\quad a\mapsto[\partial_{\bar{z}}\phi(a):\partial_{\bar{w}}\phi(a)]
$$
is of topological degree $1$.
\end{corollary}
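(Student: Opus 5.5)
Let $F:=\partial_{\bar z}\phi$ and $G:=\partial_{\bar w}\phi$, so that $(F,G)$ is, up to conjugation and a factor $2$, the complex gradient $\nabla\phi$. The plan is to compute the degree of the induced map $f$ on $\mu^{-1}(b)/\operatorname{U}(1)$ in three steps. First, show that $f$ is well defined. Second, show that $f$ is homotopic, through maps of the reduced spaces, to the map induced by a model potential. Third, compute the degree of the model directly.

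For well-definedness, differentiate $\phi(e^{it}z,e^{it}w)=\phi(z,w)$. This gives the equivariance $(F,G)(e^{it}a)=e^{it}(F,G)(a)$, so the class $[F:G]$ depends only on the orbit. Next, $(F,G)(a)\neq 0$ for $a\neq 0$. To see this, restrict $\phi$ to the complex line through $a$; as in the $c=0$ case of Proposition~\ref{d=2}, the radial derivative satisfies $\langle\nabla\phi(a),a\rangle>0$. Since $b$ is a regular value with $\mu^{-1}(b)$ nonempty and $b\neq\mu(0)$, we have $0\notin\mu^{-1}(b)$, and $f$ is a well-defined real analytic map.

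Next I identify the quotient. The moment map of $dd^c\phi$ for the diagonal circle action can be normalised as $\mu=\langle\nabla\phi(a),a\rangle$, up to a positive constant and an additive constant. It is strictly increasing along each radial ray $t\mapsto ta$: its $t$-derivative is controlled by the Levi form on the line $\mathbb{C}a$, which is $\geq\varepsilon|a|^2$. The bound $dd^c\phi\ge\varepsilon\omega$ then forces $\mu\to\infty$ along every ray. Hence each ray meets $\mu^{-1}(b)$ exactly once, for $b>\mu(0)$. This gives a $\operatorname{U}(1)$-equivariant diffeomorphism $\mu^{-1}(b)\cong S^3$, so the quotient is $\mathbb{P}^1$, identified with the space of complex lines via $a\mapsto[a]$.

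Under this identification, $f$ becomes the map $g\colon\mathbb{P}^1\to\mathbb{P}^1$, $[a]\mapsto[F(r(a)a):G(r(a)a)]$, where $r(a)$ is the unique radial parameter placing the point on the level set. For the homotopy, consider $\phi_s=(1-s)\phi+s|z|^2+s|w|^2$ for $s\in[0,1]$. Each $\phi_s$ is $\operatorname{U}(1)$-invariant and satisfies $dd^c\phi_s\ge\min(\varepsilon,1)\,\omega$, so the argument above applies uniformly in $s$. The nonvanishing of the gradient away from $0$ persists, since the inequality $\langle\nabla\phi_s(a),a\rangle>0$ is convex in $s$. Consequently the maps $g_s$ on $S^3/\operatorname{U}(1)$ vary continuously. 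Alternatively, one can avoid level sets entirely by rescaling to the unit sphere $S^3$, using radial uniqueness.

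At $s=1$ the map is $[a_1:a_2]\mapsto[\bar a_1:\bar a_2]$ up to conventions. This has degree $\pm1$, and the sign must be matched against the orientation convention of the paper: with the $\bar\partial$ as written, conjugation is absorbed if the target $\mathbb{P}^1$ is oriented via conjugate coordinates. Concretely, $F=\bar z$ corresponds to $\partial_{\bar z}$ of $|z|^2$, namely $z$, which gives the identity, of degree $1$.

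The main obstacle I expect is the second step: verifying that $\mu$ is radially monotone and proper, so that the level sets are genuinely $S^3$ and can be deformed uniformly. The difficulty is that the circle action is only Hamiltonian for $dd^c\phi$, and one must rule out $\mu^{-1}(b)$ having several components. I would first try to establish the formula $\frac{d}{dt}\mu(ta)=t\,\partial\bar\partial\phi(ta)(a,\bar a)\cdot\text{const}>0$ explicitly from $\operatorname{U}(1)$-invariance. Proposition~\ref{d=2} (finite fibers) is not strictly needed for this argument; it could serve as a cross-check that $f$ has discrete preimages, so that the degree equals a local count.
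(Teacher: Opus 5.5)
Your argument is correct and is essentially the paper's. On the fixed level set $\mu^{-1}(b)$, the gradient of your $\phi_s$ is $(1-s)\nabla\phi(a)+2sa$, which is the paper's straight-line homotopy $H_s(a)=(1-s)\nabla\phi(a)+sa$ up to a harmless factor; it stays nonzero because $\langle\cdot,a\rangle>0$ and descends by $\operatorname{U}(1)$-equivariance, so you never need to move level sets with $s$, and your radial monotonicity of $\mu$ usefully proves the identification $\mu^{-1}(b)/\operatorname{U}(1)\cong\mathbb{P}^1$, $[a]\mapsto[a]$, that the paper only asserts. Drop the conjugation/orientation hedge at the endpoint: since $\partial_{\bar z}(|z|^2+|w|^2)=z$ and $\partial_{\bar w}(|z|^2+|w|^2)=w$, the $s=1$ map is literally the identity, so the degree is $+1$ with no convention-matching.
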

\begin{proof}
Let ${\Omega} := \mathbb{C}^2 \setminus \{0\}$. Because $\phi$ is uniformly strictly plurisubharmonic, the moment map $\mu \colon \mathbb{C}^2 \rightarrow \mathbb{R}$ grows at least quadratically, ensuring it is a proper real analytic function.
By Sard's theorem for smooth manifolds, the set of critical values of $\mu$ has Lebesgue measure zero.
Fix a regular value $b \in \mu({\Omega})$. 

Because $b$ is a regular value and $\mu$ is proper, the level set $\mu^{-1}(b)$ is a compact, smooth real 3-dimensional manifold.
By the strict plurisubharmonicity of $\phi$, this level set is diffeomorphic to the 3-sphere $\mathbb{S}^3$.
The $\operatorname{U}(1)$-action on $\mathbb{C}^2$ restricts to a free action on $\mu^{-1}(b)$, and the symplectic quotient $X_b := \mu^{-1}(b) / \operatorname{U}(1)$ is a compact real analytic 2-manifold bi-holomorphic to the complex projective line $\mathbb{P}^1$.

Let $\nabla \phi(a) := 2(\partial_{\bar{z}}\phi(a), \partial_{\bar{w}}\phi(a))$ denote the complex gradient vector.
The global $\operatorname{U}(1)$-invariance of $\phi$ dictates the equivariance relation $\nabla \phi(e^{it}a) = e^{it}\nabla \phi(a)$ for all $t \in \mathbb{R}$.
Consequently, the complex line spanned by the gradient is invariant under the phase action, yielding that $\nabla \phi(e^{it}a)$ and $\nabla \phi(a)$ represent the same homogeneous coordinates in $\mathbb{P}^1$.
Therefore, the induced mapping $f \colon X_b \rightarrow \mathbb{P}^1$ is a well-defined real analytic mapping.
We first compute the topological mapping degree of $f$. Restrict the function $\phi$ to the 1-dimensional complex line $L_a := \{\zeta a \mid \zeta \in \mathbb{C}\}$ passing through the origin and a point $a \in \mu^{-1}(b)$.
Define $v(\zeta) := \phi(\zeta a)$. Because $\phi$ is strictly plurisubharmonic and $\operatorname{U}(1)$-invariant, $v$ is a radial and strictly subharmonic function on $\mathbb{C}$.
The strict subharmonicity condition $\Delta v > 0$ strictly forces the radial derivative to be positive for all non-zero radii, which yields the inequality
$$
\langle \nabla \phi(a), a \rangle > 0
$$
for all $a\in\Omega$, where $\langle \cdot, \cdot \rangle$ denotes the real Euclidean inner product on $\mathbb{R}^4$.
Define a continuous homotopy $H_s \colon \mu^{-1}(b) \rightarrow \mathbb{C}^2$ by $H_s(a) := (1-s)\nabla \phi(a) + s a$ for $s \in [0,1]$.
We evaluate its projection onto the outward radial vector $a$ as
$$
\langle H_s(a), a \rangle = (1-s)\langle \nabla \phi(a), a \rangle + s|a|^2.
$$
Since both $\langle \nabla \phi(a), a \rangle > 0$ and $|a|^2 > 0$, their convex combination is strictly positive for all $s \in [0,1]$.
This guarantees that $H_s(a) \neq 0$ anywhere on $\mu^{-1}(b)$. Because $H_s$ maintains the $\operatorname{U}(1)$-equivariance, it descends to a well-defined homotopy $h_s([a]) := [H_s(a)]$ on the quotient space $X_b$.
The homotopy continuously deforms our mapping $h_0 = f$ into the identity mapping $h_1([a]) = [a]$ on $\mathbb{P}^1$.
Since the topological degree is invariant under continuous homotopies, we obtain $\deg(f) = 1$. This completes the proof.
\end{proof}

\begin{remark}
The significance of Corollary~\ref{Moment} lies in its ability to extract a highly rigid topological structure out of a real analytic PDE condition.
The lack of real convexity permits the gradient map in $\mathbb{C}^2$ to fold, generating multiple preimages.
However, by transitioning to the symplectic quotient, we filter out this local geometric noise.
The resulting induced map on $\mathbb{P}^1$ is a proper mapping of topological degree exactly 1, proving that macroscopically, the gradient mapping fundamentally preserves the complex projective geometry of the target space without extraneous winding.
\end{remark}

\section{Regularity of Fiber-wise Holomorphic Mappings}

We transition to investigating the conditions under which a continuous mapping, known to be holomorphic on the individual fibers of a proper fibration, can be elevated to a global holomorphic mapping.
The strategy relies on translating the pointwise agreement of sections into an infinite intersection of complex analytic subsets within the relative morphism space.
By invoking the Noetherian property of analytic local rings, this descending chain must stabilize, enforcing global holomorphicity.
\begin{lemma}\label{Main_Lemma}
Let $X,Y,Z$ be complex manifolds with $\dim(Y)=\dim(X)+1$ and $\phi\colon Y\rightarrow X$ a surjective proper holomorphic map with connected fibers.
Let $\{s_i\}_{i\geq1}$ be a sequence of holomorphic sections of $\phi$ with mutually disjoint images, and let $f\colon Y\rightarrow Z$ be a continuous mapping.
Assume that $f|_{\phi^{-1}(x)}$ and $f\circ s_i$ are holomorphic for every $x\in X$ and all $i\geq1$. Then $f$ is holomorphic.
\end{lemma}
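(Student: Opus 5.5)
The plan is to read $f$ as a continuous section of a relative morphism space and use the sections $s_i$ to cut down, via the Noetherian property, the analytic set where that section can lie. The fibres $Y_x:=\phi^{-1}(x)$ are compact connected complex curves, since $\phi$ is proper with relative dimension one. The map $f$ therefore gives a family of holomorphic maps $f_x\colon Y_x\to Z$, and the problem is to show that this family varies holomorphically in $x$.

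\textbf{Step 1 (the parameter space).} Because $\phi$ is proper, Douady's relative space $\mathcal{H}:=\operatorname{Hol}_X(Y,Y\times Z)$ exists; its points are the graphs $\Gamma_{f_x}\subset Y_x\times Z$. It is a complex space with a holomorphic projection $p\colon\mathcal{H}\to X$ and a universal evaluation map $\mathrm{ev}\colon\mathcal{H}\times_X Y\to Z$. The map $f$ defines a set-theoretic section
\[
\sigma\colon X\to\mathcal{H},\qquad x\mapsto[\Gamma_{f_x}].
\]
Continuity of $f$ together with properness of $\phi$ should give continuity of $\sigma$ for the Douady topology. The reason is that graphs of maps from a compact source that converge uniformly converge as cycles.

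\textbf{Step 2 (the analytic sets).} For each $i$, set
\[
A_i:=\{h\in\mathcal{H}\mid \mathrm{ev}(h,s_i(p(h)))=f(s_i(p(h)))\}.
\]
Since $s_i$ and $f\circ s_i$ are holomorphic, this is a closed analytic subset of $\mathcal{H}$. Let $A_\infty:=\bigcap_i A_i$; by construction $\sigma(X)\subseteq A_\infty$. Fix $x_0\in X$ and $h_0=\sigma(x_0)$. The descending chain $A_1\supseteq A_1\cap A_2\supseteq\cdots$ is a chain of germs at $h_0$, and it corresponds to an ascending chain of ideals in the Noetherian local ring $\mathcal{O}_{\mathcal{H},h_0}$. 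Hence it stabilizes, so $A_\infty$ is, near $h_0$, an analytic set defined by finitely many of the $A_i$. The same holds near every nearby point, by coherence.

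\textbf{Step 3 (the key claim).} Next I show that $p|_{A_\infty}$ is injective near $\sigma(x_0)$, so that $A_\infty$ is a set-theoretic section over a neighbourhood of $x_0$. Take two points of $A_\infty$ over the same $x$. They are holomorphic maps $Y_x\to Z$ that agree at the infinitely many distinct points $s_i(x)$; these are distinct because the images of the $s_i$ are disjoint. Suppose these points accumulate in the compact connected curve $Y_x$. Then the identity theorem forces the two maps to be equal.

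This is the step I expect to be the main obstacle. Infinitely many points in a compact curve always accumulate, so accumulation itself is fine. The difficulty is in applying the identity theorem there: it needs both maps to land in a common chart of $Z$ near the accumulation point, and this holds by continuity. If $Y_x$ is singular, I would first pass to a normalization.

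\textbf{Step 4 (holomorphicity of $\sigma$).} The set $A_\infty$ is an analytic subset of $p^{-1}(U)$, and $p$ restricted to it is bijective onto $U$ and proper, since it contains the continuous image $\sigma(U)$. I want to conclude that $\sigma$ is holomorphic, by arguing that a continuous map whose graph is analytic is holomorphic on the normal space $X$. If $p|_{A_\infty}$ is not immediately a biholomorphism onto its image, I would handle this with Riemann extension. On the dense set where $A_\infty$ is smooth and $p$ has maximal rank, $\sigma$ is a local inverse of $p$ and hence holomorphic. Elsewhere, $\sigma$ is continuous, so the graph theorem for continuous maps with analytic graph applies.

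\textbf{Conclusion.} Finally, $f(y)=\mathrm{ev}(\sigma(\phi(y)),y)$ expresses $f$ as a composition of holomorphic maps. Therefore $f$ is holomorphic.
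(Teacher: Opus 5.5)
Your strategy matches the paper's: a section $\sigma$ of a relative morphism space, analytic sets cut out by the conditions at the $s_i$, Noetherian stabilization, the identity theorem to identify the intersection with $\sigma$, and then holomorphy of a continuous section with analytic graph. The paper invokes Zariski's main theorem for that last step and you invoke the graph theorem, which is the same content.

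\textbf{The gap is that you run the argument over all of $X$, whereas the paper works only over the set $U$ of regular values.} Over a critical value $x_0$, three things can go wrong:
\begin{itemize}
\item the fibre $Y_{x_0}$ need not be a curve (for $\dim X\ge 2$ it can contain a surface);
\item it need not be irreducible;
\item $\phi$ need not be flat there.
\end{itemize}
So both Step~1 and Step~3 break. Concretely, take $Y=\operatorname{Bl}_q(\Delta\times\mathbb{P}^1)$ with $q=(0,0)$, $s_i(x)=(x,i)$, $Z=\mathbb{P}^1$ and $f(x,w)=[x:w]$.
\begin{itemize}
\item All the points $s_i(0)$ lie on the strict transform $\tilde F$ of $\{0\}\times\mathbb{P}^1$, and $f$ is constant on $\tilde F$.
\item On the exceptional curve $E$, $f$ restricts to an isomorphism.
\item Any map $Y_0\to\mathbb{P}^1$ that is constant with that same value on $\tilde F$, and is an arbitrary degree-one map on $E$ sending the node to that value, lies in $A_\infty$.
\end{itemize}
Hence $A_\infty\cap p^{-1}(0)$ is two-dimensional near $\sigma(0)$, so $p|_{A_\infty}$ is not injective there and Step~4 cannot be run at $x_0=0$. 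Passing to the normalization does not help. The normalization of a reducible fibre is disconnected, and the identity theorem only propagates along the component where the $s_i(x_0)$ accumulate.

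\textbf{The repair is what the paper does.} Let $U$ be the set of regular values of $\phi$.
\begin{itemize}
\item Its complement $\phi(\operatorname{Crit}\phi)$ is analytic by Remmert's proper mapping theorem and nowhere dense by Sard.
\item Over $U$, $\phi$ is a proper submersion, hence flat, and its fibres are smooth, connected, hence irreducible, compact curves.
\item Your Steps~1--4 then go through over $U$ and show $f$ is holomorphic on $\phi^{-1}(U)$.
\item $Y\setminus\phi^{-1}(U)$ is a nowhere dense analytic subset and $f$ is continuous, so Riemann's extension theorem in local charts of $Z$ finishes the proof.
\end{itemize}

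A smaller point in Step~2: stabilization of the germs at $h_0$ does not by itself give stabilization on a fixed neighbourhood of $h_0$, since the neighbourhoods may shrink with $k$. Coherence is also not the relevant input. What you need is the standard fact that a decreasing sequence of analytic sets is stationary on compact subsets, or simply that an arbitrary intersection of analytic sets is analytic, which is all Step~4 uses.
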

\begin{proof}
Let $U$ be a Zariski dense open subset of $X$ consisting of regular values of $\phi$.
For any $x\in U$, the fiber $Y_x:=\phi^{-1}(x)$ is a compact Riemann surface.
Since $Y_x$ is compact, the infinite set of points $\{s_i(x)\}_{i\geq1}$ possesses an accumulation point in $Y_x$.
By the analytic continuation principle for complex manifolds, any two holomorphic mappings from $Y_x$ to $Z$ that coincide on $\{s_i(x)\}_{i\geq1}$ are identical.
Consider the relative morphism space $$
\pi\colon\operatorname{Hom}_U(\phi^{-1}(U),Z\times U):=\{(g,x):g\in\operatorname{Hom}(Y_x,Z),x\in U\}\rightarrow U,\quad(g,x)\mapsto x 
$$ over $U$, of which connected components are complex analytic spaces by Douady's theorem \cite{Douady}.
Let $\sigma\colon U\rightarrow\operatorname{Hom}_U(\phi^{-1}(U),Z\times U)$ be the continuous section defined by $\sigma(x):=(f|_{\phi^{-1}(x)},x)$, and denote by $\mathcal{H}$ the connected component of $\operatorname{Hom}_U(\phi^{-1}(U),Z\times U)$ containing $\sigma(U)$.
For each integer $k\geq1$, we define a holomorphic mapping $\Phi_k\colon\mathcal{H}\rightarrow (Z\times\cdots\times Z) \times U$ by $$\Phi_k(g,x):= (g(s_1(x)),\dots,g(s_k(x)), x),$$and define$$A_k := \Phi_k^{-1}(\{(f(s_1(x)), \dots, f(s_k(x)), x):x\in U\}).$$ Then $\{A_k\}_{k\geq1}$ is a descending chain of complex analytic subsets in $\mathcal{H}$.
By the analytic continuation principle, it holds that $$\bigcap_{k=1}^\infty A_k=\sigma(U).$$

Because the local rings of $\mathcal{H}$ are Noetherian, the descending chain $\{A_k\}_{k\geq1}$ must stabilize locally in a neighborhood of any point in $\sigma(U)$.
Consequently, the restriction $\pi|_{\sigma(U)}$ is a bijective holomorphic mapping onto its image.
Since $U$ is smooth, Zariski's main theorem \cite{GunningRossi} immediately yields that $\sigma$ is holomorphic.
By the universal property of the relative morphism space, the mapping $f|_{\phi^{-1}(U)}$ itself is holomorphic.
Since $f$ is continuous and the singular locus $Y-\phi^{-1}(U)$ is a proper analytic subset of $Y$, Riemann's extension theorem applied locally in the coordinate charts of $Z$ guarantees that $f$ is holomorphic on the entirety of $Y$.
\end{proof}

The application of Lemma \ref{Main_Lemma} converts algebraic data, namely the stabilization of descending analytic chains, into global analytic rigidity.
We now apply this machinery to elevate continuous homomorphisms between elliptic fibrations and Abelian schemes into holomorphic morphisms.
By utilizing the group law of the elliptic fibration, a single torsion-free holomorphic section generates the required sequence of disjoint sections.
\begin{corollary}\label{Abel}
Let $\pi\colon E\rightarrow B$ be a smooth elliptic fibration and $\varphi\colon A\rightarrow B$ an Abelian scheme over a complex manifold $B$.
Let $f\colon E\rightarrow A$ be a continuous mapping such that $\pi=\varphi\circ f$ and
$$
f_b\colon \pi^{-1}(b)\rightarrow \varphi^{-1}(b), \quad P\mapsto f(P)
$$
is a homomorphism for every $b\in B$.
Assume there exists a holomorphic section $\sigma$ of $\pi$ such that $f\circ\sigma$ is holomorphic and $\sigma(b)$ is torsion-free for every $b\in B$.
Then $f$ is holomorphic.
\end{corollary}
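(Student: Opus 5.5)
We will deduce Corollary~\ref{Abel} from Lemma~\ref{Main_Lemma} with $X=B$, $Y=E$, $Z=A$ and $\phi=\pi$. A smooth elliptic fibration is a proper surjective holomorphic submersion with connected fibers (compact complex tori of dimension one), so $\dim E=\dim B+1$. Two hypotheses of the lemma remain to be checked. First, $f$ must be holomorphic on each fiber. Second, we need a sequence of holomorphic sections with mutually disjoint images along which $f$ is holomorphic.

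For the fiberwise condition, fix $b\in B$. The map $f_b\colon\pi^{-1}(b)\to\varphi^{-1}(b)$ is a continuous group homomorphism between compact complex tori. It lifts to a continuous additive map $\mathbb{C}\to\mathbb{C}^g$ of universal covers, and such a map is $\mathbb{R}$-linear. Here we need it to be $\mathbb{C}$-linear, which does not follow automatically: a real-linear homomorphism of tori need not be holomorphic. We therefore read ``homomorphism'' in the statement as a homomorphism of complex Lie groups (of abelian varieties). Under that reading each $f_b$ is holomorphic.

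For the sections, we use the fiberwise group structure, taking the origin to be the zero section $e$ of $E$. Put $s_i:=[i]\circ\sigma$ for $i\geq 1$, where $[i]\colon E\to E$ is multiplication by $i$; this is a holomorphic endomorphism over $B$, so each $s_i$ is a holomorphic section. The images are disjoint: if $s_i(b)=s_j(b)$ with $i\neq j$, then $(i-j)\sigma(b)=0$, contradicting that $\sigma(b)$ is torsion-free. Since each $f_b$ is a homomorphism,
$$f\circ s_i=[i]_A\circ(f\circ\sigma),$$
where $[i]_A$ is multiplication by $i$ on the Abelian scheme. This is a composition of holomorphic maps, hence holomorphic.

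All hypotheses of Lemma~\ref{Main_Lemma} now hold, and the lemma shows that $f$ is holomorphic. The main obstacle is the fiberwise holomorphicity step, because a merely continuous homomorphism of tori can fail to be holomorphic. If the statement is meant with ``homomorphism'' in the continuous sense only, one would try to recover $\mathbb{C}$-linearity of $f_b$ from the holomorphy of $f\circ s_i$ along the dense subgroup generated by $\sigma(b)$, but this does not seem sufficient in general. The fiberwise holomorphicity hypothesis is exactly what Lemma~\ref{Main_Lemma} requires, so the reading of the statement matters here.
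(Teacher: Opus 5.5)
Your argument is the paper's argument: the same sections $s_k=k\sigma$, disjointness from torsion-freeness, the identity $f\circ s_k=\mu_k\circ(f\circ\sigma)$, and an appeal to Lemma~\ref{Main_Lemma}. The one divergence is the fiberwise step. There the paper simply asserts that any continuous homomorphism between complex Lie groups is holomorphic.

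Your doubt at that step is justified. The assertion is false, and the corollary itself fails if ``homomorphism'' only means continuous group homomorphism. Take $E_0=\mathbb{C}/\mathbb{Z}[i]$ and $E=A=B\times E_0$ with the projections. Let $f(b,z)=(b,\bar z)$, which is well defined since $\overline{\mathbb{Z}[i]}=\mathbb{Z}[i]$. Let $\sigma(b)=(b,p)$ for any non-torsion $p\in E_0$, even one whose multiples are dense. Every $f_b$ is a continuous group automorphism, and $f\circ\sigma\colon b\mapsto(b,\bar p)$ is holomorphic. Yet $f$ is antiholomorphic on every fiber.

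So no argument through the subgroup generated by $\sigma(b)$ can work. Your strengthened reading is forced, and with it your proof is complete, granting Lemma~\ref{Main_Lemma} as the paper does.

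Under your reading, neither $\sigma$ nor Lemma~\ref{Main_Lemma} is really needed. Over a small $U\subseteq B$ write $E|_U=(U\times\mathbb{C})/\langle\omega_1,\omega_2\rangle$ and $A|_U=(U\times\mathbb{C}^g)/\langle\lambda_1,\dots,\lambda_{2g}\rangle$ with holomorphic periods. Continuity of $f$ gives a lift $(b,z)\mapsto(b,z\,v(b))$ with $v$ continuous. Moreover $v(b)\omega_1(b)=\sum_k n_k\lambda_k(b)$ with integers $n_k$ that are locally constant. Hence $v=\sum_k n_k\lambda_k/\omega_1$ is holomorphic, and so is $f$. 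The torsion-free section would only matter for the continuous version, which is false.
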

\begin{proof}
For each integer $k\geq1$, we define a section $s_k\colon B\rightarrow E$ by $s_k(b) := k\sigma(b)$, where the multiplication is taken with respect to the group structure of the elliptic fibration $\pi$.
Since $\sigma(b)$ is torsion-free for every $b\in B$, the sequence of sections $\{s_k\}_{k\geq1}$ has mutually disjoint images.
Since $f$ is continuous, we have that $f_b$ is also continuous for all $b\in B$.
But any continuous homomorphism between complex Lie groups is holomorphic, ensuring that $f|_{\pi^{-1}(b)}$ is holomorphic for all $b\in B$.
Because the restriction $f_b\colon \pi^{-1}(b)\rightarrow \varphi^{-1}(b)$ is a group homomorphism for every $b\in B$, we obtain
$$
(f\circ s_k)(b) = f_b(k\sigma(b)) = k f_b(\sigma(b)) = k(f\circ\sigma)(b)
$$
for all $b\in B$.
Therefore we have the identity $f\circ s_k = \mu_k \circ (f\circ\sigma)$, where $\mu_k\colon A\rightarrow A$ denotes the integer multiplication morphism of the Abelian scheme $A$.
Since $A$ is Abelian, the mapping $\mu_k$ is holomorphic. By the assumption that $f\circ\sigma$ is holomorphic, the composition $f\circ s_k$ is holomorphic for every integer $k\geq1$.
The desired result then follows immediately from Lemma~\ref{Main_Lemma}.
\end{proof}

\begin{remark}
The proof of Corollary~\ref{Abel} encapsulates the overarching theme of this section: rigid global structures in complex geometry can be rigorously enforced by combining proper fibrations with the positive constraint of a torsion-free section.
The algebraic stabilization within the relative morphism space serves as the essential bridge between continuous topological data and complex analytic rigidity.
\end{remark}

\section{Rigidity of Mappings from Kobayashi Hyperbolic Manifolds}

We conclude with a different flavor of geometric rigidity.
In this section, we demonstrate that continuous maps from a Kobayashi hyperbolic manifold can be elevated to global isomorphisms, using a very ample hypersurface to lock the complex moduli.
\begin{theorem}\label{Kobayashi}
Let $\phi\colon X\rightarrow Y$ be a continuous map of mapping degree $1$ between compact complex $n$-manifolds of equal dimension $n\geq4$, where $Y$ is projective and $X$ is Kobayashi hyperbolic.
Suppose that there exist a holomorphic fibration $f\colon X\rightarrow\mathbb{P}^1$ and a very ample hypersurface $H$ in $X$ such that $\phi|_{f^{-1}(t)}$ is holomorphic for every $t\in\mathbb{P}^1$ and $\phi|_H$ is injective.
Then $\phi$ is a bi-holomorphic isomorphism. 
\end{theorem}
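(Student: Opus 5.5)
The plan has three stages: show that $\phi$ is holomorphic, show that it is injective, and then conclude that it is biholomorphic.

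\textbf{Stage 1: holomorphicity.} I would mimic Lemma~\ref{Main_Lemma}, replacing the disjoint sections by the hypersurface $H$. Since $H$ is very ample, it is the zero locus of a section of a very ample line bundle. It therefore meets every fiber $X_t:=f^{-1}(t)$ in a hypersurface $H_t$ of $X_t$. Because $H$ is ample, $H_t$ is nonempty and of positive dimension, since $n-2\ge 2$.

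Next I would need the restriction $\phi|_H$ to be holomorphic. The hypothesis only gives injectivity, so I would argue as follows. $H$ is itself fibered over $\mathbb{P}^1$ by the $H_t$, and $\phi$ is holomorphic on each $H_t$. Taking a general member of the linear system and iterating hyperplane sections, one reduces to a curve meeting every fiber, where continuity together with fiberwise holomorphicity can be tested.

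The key rigidity input is Kobayashi hyperbolicity. Each $X_t$ is hyperbolic, so the space $\operatorname{Hom}(X_t,Y)$ of holomorphic maps is compact in the compact-open topology (Kobayashi), and automorphism-type deformations are discrete. The holomorphic maps $X_t\to Y$ agreeing with $\phi$ on $H_t$ form an analytic subset of the Douady space. Since $H_t$ is ample in $X_t$, this subset consists of the single point $\phi|_{X_t}$.

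With this in hand I would run the descending-chain argument of Lemma~\ref{Main_Lemma} over the Zariski open set $U\subset\mathbb{P}^1$ of regular values. The section $t\mapsto\phi|_{X_t}$ is then a continuous section that is bijective onto its image inside a relative Hom space. Zariski's main theorem makes it holomorphic over $U$, and Riemann extension across the finitely many singular fibers gives holomorphicity of $\phi$ on all of $X$.

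\textbf{Stage 2: injectivity.} Now $\phi$ is a holomorphic map of degree $1$ between compact manifolds of equal dimension, hence bimeromorphic. Its exceptional locus $E$, where $\phi$ fails to be a local biholomorphism, is an analytic subset. Because the degree is $1$ and $Y$ is smooth, $E$ is contained in the Jacobian divisor, and every positive-dimensional fiber of $\phi$ lies in $E$.

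Suppose some fiber $F$ of $\phi$ has positive dimension. Since $H$ is ample, $H\cap F\neq\emptyset$. If $\dim F\ge 2$, then $H\cap F$ is positive-dimensional, so $\phi|_H$ collapses it and fails to be injective, a contradiction. If $\dim F=1$, I would use the fact that by Zariski's main theorem the exceptional set of a bimeromorphic morphism onto a smooth variety is a divisor whose fibers are covered by rational curves. The relevant results are Abhyankar's lemma and the structure of exceptional loci: fibers over smooth points are rationally chain connected. Kobayashi hyperbolicity of $X$ forbids rational curves, so $E=\emptyset$. In fact this last argument already handles all cases, and the injectivity on $H$ serves only as a consistency check. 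It follows that $\phi$ is a finite, unramified map of degree $1$, hence a bijective holomorphic map, and therefore a biholomorphism.

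\textbf{Main obstacle.} I expect Stage 1, and specifically the uniqueness of the fiberwise extension from $H_t$, to be the hardest step. The hypothesis gives only injectivity on $H$, not holomorphicity of $\phi$ on $H$, so it is unclear how to anchor the fiberwise maps. My first attempt would be to replace the chain $A_k$ of Lemma~\ref{Main_Lemma} by evaluation at a countable dense set of points of $H_t$. I would then use Noetherianity to stabilize the chain, and the compactness of $\operatorname{Hom}(X_t,Y)$ from hyperbolicity to ensure that $\sigma$ is continuous in the Douady topology.
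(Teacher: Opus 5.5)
\textbf{The gap is in Stage 1, and it cannot be closed.} Nothing in your argument makes $\phi$ depend holomorphically on the base parameter $t$, i.e.\ in the direction transverse to the fibers of $f$.

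In Lemma~\ref{Main_Lemma} the sets $A_k$ are analytic only because the compositions with the sections are assumed holomorphic in the base variable. Injectivity of $\phi|_H$ is a purely topological condition. So evaluating at points $p(t)\in H_t$, countably many or not, gives conditions $g(p(t))=\phi(p(t))$ whose right-hand side is merely continuous in $t$. The resulting sets are not analytic, and Noetherianity gives nothing. Cutting down to a curve meeting every fiber does not help either, since fiberwise holomorphicity says nothing on a set meeting each fiber in finitely many points.

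Two supporting claims are also false:
\begin{itemize}
\item Kobayashi's compactness of $\operatorname{Hol}(M,N)$ requires the \emph{target} $N$ to be compact hyperbolic, not the source. For a curve $C$ of genus $\geq 2$, $\operatorname{Hom}(C,\mathbb{P}^1)$ is not compact.
\item Maps agreeing on an ample divisor need not coincide. Take a double cover of a hyperbolic manifold branched along an ample divisor. The identity and the covering involution agree on the ramification divisor, which is ample.
\end{itemize}

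\textbf{The statement itself is false.} Let $C$ be a genus-$3$ curve with a fixed-point-free involution $\sigma$, let $C',C''$ be curves of genus $\geq 2$, and let $B$ be hyperelliptic with involution $\iota$. Let $\mathbb{Z}/2$ act on $C\times C'\times C''\times B$ by $\sigma\times\mathrm{id}\times\mathrm{id}\times\iota$.
\begin{itemize}
\item The action is free, so the quotient $X$ is a smooth projective Kobayashi hyperbolic $4$-fold.
\item The map $f\colon X\to B/\iota\cong\mathbb{P}^1$ is a fibration with connected fibers, multiple over the branch points.
\item Take a diffeomorphism $\bar\tau\neq\mathrm{id}$ of $\mathbb{P}^1$ supported in a small disc $D$ avoiding the branch points. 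It lifts to a diffeomorphism $\tau$ of $B$ commuting with $\iota$.
\item The map $(p,b)\mapsto(p,\tau(b))$ descends to a homeomorphism $\phi\colon X\to Y:=X$. It maps each fiber of $f$ holomorphically onto a fiber, and it is the identity on the open set $X\setminus f^{-1}(\overline{D})$.
\end{itemize}
Hence $\phi$ has degree $1$ and is injective on every very ample $H$. Yet $\phi$ is not holomorphic: otherwise the identity theorem would force $\phi=\mathrm{id}$. Here $f\circ\phi=\bar\tau\circ f$.

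\textbf{Comparison with the paper.} The paper takes a different route from yours. It builds $g\colon Y\to\mathbb{P}^1$ with fibers $\phi(f^{-1}(t))$. It then uses finiteness of $\operatorname{Aut}(f^{-1}(t))$ to view $t\mapsto\phi|_{f^{-1}(t)}$ as a continuous, hence holomorphic, section of a finite covering inside $\operatorname{Hom}_{\mathbb{P}^1}(X,Y)$. That argument needs $g\circ\phi=f$, which the paper infers merely from $\phi$ mapping fibers to fibers. One only gets $g\circ\phi=\bar\tau\circ f$ for some homeomorphism $\bar\tau$ of $\mathbb{P}^1$, exactly as in the example above. Your Stage 2 is correct once holomorphicity is known: the exceptional locus of a birational morphism onto a smooth target is uniruled, and hyperbolicity rules that out. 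It matches the paper's final step, but no argument can supply Stage 1 under the stated hypotheses.
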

\begin{proof}
We divide the proof into three main steps: Establishing that the restrictions of $\phi$ on the fibers are finite, then proving their images are mutually disjoint, and finally upgrading the continuous map $\phi$ to a biholomorphism.

\textbf{Step~1:} Since $H\in\operatorname{div}(X)$ is ample, the complex manifold $X$ is projective by Kodaira's embedding theorem \cite{GriffithsHarris}.
Let $$\Sigma:=\mathbb{P}^1-\{b_1,\dots,b_m\}$$ be the punctured Riemann sphere consisting of the regular values of $f$.
For any $t \in \Sigma$, the fiber $X^t := f^{-1}(t)$ is a smooth projective subvariety of $X$ of pure dimension $n-1$.
Since $\phi^t := \phi|_{X^t}$ is holomorphic and $Y$ is projective, Remmert's proper mapping \cite{GunningRossi} theorem and Chow's theorem \cite{GriffithsHarris} ensure that the image $Y^t := \phi(X^t)$ is a projective subvariety of $Y$.
We first establish the injectivity of the restriction $\phi^t$ for any fixed $t \in \Sigma$.
For the sake of contradiction, we now assume that $\phi^t$ is not injective. 
Suppose first that $\dim(Y^t)\leq n-3$.
By Chevalley's fiber dimension theorem, the generic fiber of $\phi^t$ would have dimension at least 2, and hence would contain a projective surface.
Since the intersection of the ample divisor $H$ with a projective surface contains a curve in $H$, the morphism $\phi^t$ would contract a curve in $H$ to a point, violating the injectivity of $\phi|_H$.
Therefore, we must have at least $\dim(Y^t) \ge n-2$. 

Suppose now that $\dim (Y^t) = n-2$.
Take a generic point $y\in Y^t$. Then the fiber $C$ of $\phi^t$ over $y$ is a projective curve.
Since $H \in \operatorname{div}(X)$ is a very ample divisor, $H^t:=X^t\cap H$ is very ample in $X^t$, making $H^t\cap C$ non-empty and $\phi(H^t)=Y^t$.
Because $\phi|_H$ is injective by assumption, the mapping degree of $\phi^t|_{H^t}$ onto its image is exactly 1.
By choosing $y$ to be generic, we ensure that the restriction $\phi^t|_{H^t}$ is unramified at its unique preimage $x \in H^t$, rendering its differential an isomorphism at $x$.
As a result, the tangent space $T_x H^t$ intersects trivially with the kernel of the differential of $\phi^t$ at $x$, which is precisely $T_x C$.
This proves that $C$ intersects $H^t$ transversally, and hence their intersection product is $H^t \cdot C = 1$.
Because $H$ is a very ample divisor, intersecting a projective curve $C$ at a single point with a multiplicity of exactly 1 forces $C$ to be rational, that is, bi-holomorphic to $\mathbb{P}^1$.
However, as $X$ is Kobayashi hyperbolic, there exists no non-constant holomorphic mapping from $\mathbb{P}^1$ into $X$ by Brody's theorem \cite{Brody} on hyperbolicity.
This is a contradiction. Therefore, we have in fact $\dim(Y^t) = n-1$, so the regular mapping $\phi^t\colon X^t\rightarrow Y$ sends its domain of definition onto a projective variety $Y^t$ of equal dimension $n-1$.
We claim that $\phi^t$ is actually a finite morphism. Consider the Stein factorization $\phi^t = \phi^t_1 \circ \phi^t_2$ of $\phi^t$, where $\phi^t_2\colon X^t \rightarrow N$ is a surjective projective morphism with connected fibers onto a normal projective variety $N$, and $\phi^t_1\colon N \rightarrow Y^t$ is a finite morphism.
Since $\phi^t_1$ is finite and surjective, we obtain $$\dim(N) = \dim(Y^t) = n-1.$$ 
Because $\phi^t_2$ is a surjective morphism with connected fibers between projective varieties of the same dimension, its generic fiber is a single point, implying that $\phi^t_2$ is a birational morphism.
Assume to the contrary that $\phi^t$ admits a fiber $F$ of positive dimension.
Since $\phi^t_1$ is finite, this fiber $F$ must be contracted to a point entirely by $\phi^t_2$.
Therefore, the exceptional locus of the birational morphism $\phi^t_2$ is non-empty.
Now, recall from birational geometry that the exceptional locus of a proper birational morphism from a smooth projective variety is uniruled, meaning it is covered by rational curves.
However, as $X$ is Kobayashi hyperbolic, by Brody's theorem \cite{Brody}, it contains no rational curves, which leads to a contradiction.
This proves that $\phi^t$ is indeed a finite morphism. 

\textbf{Step~2:} Now, we prove that $Y^{t_1}$ and $Y^{t_2}$ cannot intersect for distinct $t_1,t_2\in\Sigma$.
Assume to the contrary that there exist points $x_1 \in X^{t_1}$ and $x_2 \in X^{t_2}$ for distinct $t_1,t_2\in\Sigma$ such that $\phi(x_1) = \phi(x_2)$.
Since $\phi^t$ is a finite surjective morphism onto its image for every $t \in \Sigma$, the images $Y^{t_1}$ and $Y^{t_2}$ are projective subvarieties of $Y$ of dimension $n-1$.
By Krull's dimension theorem, the intersection $Y^{t_1} \cap Y^{t_2}$ is a projective subvariety of $Y$ with dimension bounded below by 
\[ \dim (Y^{t_1} \cap Y^{t_2}) \ge (n - 1) + (n - 1) - n = n-2 \ge 2. \]

Consider the incidence variety 
$$I := \{(a_1, a_2) \in X^{t_1} \times X^{t_2} \mid \phi(a_1) = \phi(a_2)\}.$$
Because $\phi^{t_1}$ and $\phi^{t_2}$ are finite morphisms, the natural projection from $I$ onto $Y^{t_1} \cap Y^{t_2}$ has finite fibers.
Therefore, $I$ is a projective subvariety of the fiber product $X^{t_1} \times X^{t_2}$ with $$\dim (I) = \dim(Y^{t_1} \cap Y^{t_2}) \ge 2.$$ 
Let $p_1 \colon I \rightarrow X^{t_1}$ and $p_2 \colon I \rightarrow X^{t_2}$ be the restrictions of the canonical projections onto each factor.
Because $\phi^{t_1}$ and $\phi^{t_2}$ are finite morphisms, the projections $p_1$ and $p_2$ are also finite.
Therefore, the pullbacks $p_1^*(H)$ and $p_2^*(H)$ are both ample divisors on the projective variety $I$.
Since $\dim(I) \ge 2$, their intersection product $p_1^*(H) \cdot p_2^*(H)$ is strictly positive, ensuring the intersection of their support is non-empty.
Take any $(h_1,h_2)\in p_1^*(H) \cap p_2^*(H)$.
Then, by the very construction, we have $h_1,h_2 \in H$ such that $\phi(h_1) = \phi(h_2)$, which again contradicts the injectivity of $\phi|_H$.
We conclude that the images $Y^t$ are mutually disjoint for all $t \in \Sigma$.

\textbf{Step~3:} Because $Y$ is a smooth projective variety, and the collection $\{Y^t : t \in \Sigma\}$ consists of homologous effective divisors that are pairwise disjoint, the intersection product $Y^{t_1}\cdot Y^{t_2}$ vanishes for all distinct $t_1,t_2\in\Sigma$.
Therefore, a Zariski dense open subset of the compact complex manifold $Y$ is continuously foliated by the hypersurfaces $Y^t$ over $\Sigma$, yielding a continuous mapping $$\upsilon\colon\Sigma\rightarrow\operatorname{Chow}(Y), \quad t \mapsto Y^t$$ into the Chow variety of $Y$.
Because the connected components of $\operatorname{Chow}(Y)$ are all projective, we may consider the Zariski closure $V$ of the image $\upsilon(\Sigma)$ within $\operatorname{Chow}(Y)$.
Since $\Sigma$ is an uncountably infinite set and the members of $\upsilon(\Sigma)$ are mutually disjoint, the image $\upsilon(\Sigma)$ consists of uncountably many distinct points.
As a $0$-dimensional variety over $\mathbb{C}$ is necessarily a finite set of points, the Zariski closure $V$ must have dimension $\dim(V) \ge 1$.
This implies that $V$ is a projective family of effective divisors on $Y$ of strictly positive dimension.
Because the condition of having empty intersections is an open condition in the Zariski topology, and $\upsilon(\Sigma)$ is by construction Zariski dense in $V$, the generic members of the projective family $V$ are also mutually disjoint.
Since the effective divisors in $V$ are algebraically equivalent and generically disjoint, their intersection product vanishes.
On any smooth projective variety, a positive-dimensional projective family of effective divisors with vanishing mutual intersection numbers must possess an empty base locus.
Consequently, the linear equivalence classes of divisors in $V$ span a free pencil, which guarantees the existence of a surjective regular morphism $g\colon Y \rightarrow \mathbb{P}^1$ such that $g^{-1}(t) = Y^t$ for every $t \in \mathbb{P}^1$.
Because $\phi$ maps the fibers of $f$ into the fibers of $g$, we have $g \circ \phi = f$.
Since the mapping degree is multiplicative, we have 
\[ \deg(\phi^t) = \deg(\operatorname{id}_{\mathbb{P}^1}) \cdot \deg(\phi^t) = \deg(\phi)=1 \] for all $t\in \mathbb{P}^1$.
Since $Y$ is a smooth projective variety and $g\colon Y \rightarrow \mathbb{P}^1$ is a holomorphic fibration, Bertini's theorem \cite{Hartshorne} ensures that the general fiber of $g$ is smooth, and therefore normal.
Thus, for a generic $t\in\Sigma$, the finite morphism $\phi^t\colon X^t \rightarrow Y^t$ of degree 1 onto a normal variety is an isomorphism by Zariski's main theorem \cite{Hartshorne}.
This in particular proves that $\phi$ is injective almost everywhere. 

Take an arbitrary $t\in\Sigma$.
Since $X$ is a compact Kobayashi hyperbolic manifold, we have that $X^t$ is also Kobayashi hyperbolic.
Recall from birational geometry that any rational map into a projective variety containing no rational curves is automatically a regular morphism.
Therefore, the group of birational mappings from $X^t$ to itself coincides with $\operatorname{Aut}(X^t)$, which is finite as $$H^0(X^t;\operatorname{Der}(\mathcal{O}_{X^t}))=0$$ by the rigidity of Kobayashi hyperbolic manifolds.
Consider the relative morphism scheme $\Phi\colon \operatorname{Hom}_{\mathbb{P}^1}(X,Y)\rightarrow\mathbb{P}^1$, where $X$ and $Y$ are viewed as fibrations over $\mathbb{P}^1$ via $f$ and $g$ respectively.
For any two finite morphisms $\varphi_1, \varphi_2 \colon X^t \rightarrow Y^t$ of degree 1, the composition $$(\varphi_1)^{-1} \circ \varphi_2 \colon X^t \rightarrow X^t$$ is birational.
Consequently, the set of all finite morphisms from $X^t$ to $Y^t$ of degree 1 forms a principal homogeneous space under the $\operatorname{Aut}(X^t)$-action.
Since $\operatorname{Aut}(X^t)$ is finite, the set $Z^t$ of all finite morphisms of degree 1 in the fiber of $\Phi\colon \operatorname{Hom}_{\mathbb{P}^1}(X,Y)\rightarrow\mathbb{P}^1$ over any $t\in\Sigma$ is finite.
Moreover, since the Zariski closure of $$Z:=\{(t,\varphi):t\in\Sigma,\varphi\in Z^t\}$$ in $\operatorname{Hom}_{\mathbb{P}^1}(X,Y)$
is precisely the connected component of $\operatorname{Hom}_{\mathbb{P}^1}(X,Y)$ containing the finite morphism $\phi^{t}$ of mapping degree 1, it is a projective curve such that $\Phi|_Z$ is a branched covering.
Since $\phi$ is uniformly continuous, it induces a continuous section 
$$
\phi^*\colon \Sigma \rightarrow Z, \quad t \mapsto \phi^t
$$
of the branched covering $\Phi|_Z$.
Since $\Phi|_Z$ admits a continuous section over $\Sigma$, it is in fact not branched over $\Sigma$.
Moreover, any continuous section of an unbranched analytic covering is necessarily holomorphic, so $\phi^*$ is holomorphic on $\Sigma$.
Because $\phi$ is holomorphic along fibers and the section $\phi^*$ is also holomorphic, Osgood's lemma \cite{GunningRossi} on separate holomorphicity guarantees that $\phi$ is holomorphic on $f^{-1}(\Sigma)$.
Since $\phi$ is continuous and the singular locus $X - f^{-1}(\Sigma)$ forms a proper algebraic subset, Riemann's extension theorem guarantees that $\phi$ is holomorphic on the entirety of $X$.

Having established that $\phi\colon X \rightarrow Y$ is a proper holomorphic mapping of degree 1 between smooth projective varieties, it remains to show it is an isomorphism.
But, if $\phi$ were not an isomorphism, as it is injective almost everywhere, it would be a birational map contracting an exceptional divisor to a lower-dimensional subvariety.
Recall from birational geometry that the exceptional locus of a proper birational morphism between smooth projective varieties is uniruled, which once again contradicts the hyperbolicity of $X$.
Quod erat demonstrandum.
\end{proof}

\begin{remark}
Theorem~\ref{Kobayashi} illustrates another facet of mapping regularity in complex geometry.
While the results in Section~3 rely on an infinite sequence of disjoint sections to propagate holomorphicity, this theorem demonstrates that a single very ample hypersurface provides sufficient algebraic anchoring to elevate a continuous mapping to a global bi-holomorphic isomorphism.
The intrinsic absence of rational curves in the Kobayashi hyperbolic manifold prevents the formation of exceptional loci, ensuring that the induced birational morphism is necessarily an isomorphism.
\end{remark}

\section{On Equivariant Plurisubharmonic Optimization}

This section serves as a digression, building upon the gradient mapping rigidity established in Proposition~\ref{d=2} to address problems in constrained optimization. In classical Euclidean optimization theory, minimizing a strictly convex function on an affine domain is structurally well-behaved because the gradient mapping is an injective monotone operator, guaranteeing at most one global minimum. However, applied constrained optimization frequently requires locating the minima of a convex objective function restricted to a non-linear submanifold of the ambient space. In general, the ambient convexity is distorted by the extrinsic curvature of the constraint submanifold, generating complex critical loci characterized by multiple local minima and saddle points.

When the ambient space is a complex vector space $\mathbb{C}^d$ and the constraint is a closed complex submanifold $X \subseteq \mathbb{C}^d$, the restricted optimization problem becomes highly structured. By Cartan's theorems, such a closed complex submanifold is naturally a Stein manifold. While the restriction of a strictly convex objective function to $X$ loses its convexity, it intrinsically becomes a strictly plurisubharmonic function. This differential property fundamentally prohibits the existence of local maxima by the strong maximum principle, but it does not inherently prevent the formation of topologically mandatory saddle points or continuous valleys of extraneous local minima.

To theoretically eliminate these pathological critical structures and guarantee a well-posed optimization landscape, we require a geometric anchor. The following proposition demonstrates that if the optimization domain admits a holomorphic Lie group action, we can exploit the dimensional constraints of totally real submanifolds. By ensuring the group orbits are sufficiently large, the Levi form annihilates any complex tangent directions within a critical orbit, confining the entire critical locus of the plurisubharmonic objective function to the fixed-point set of the Lie group action.

\begin{proposition}\label{Optimization}
Let $X$ be a Stein manifold of complex dimension $n$, and $G$ a Lie group acting holomorphically on $X$ with fixed-point set $X^G$. Suppose that the orbit of every point $x\in X\setminus X^G$ has real dimension greater than $n$. Then, the critical points of any $G$-invariant strictly plurisubharmonic $C^2$ function $u\colon X\rightarrow\mathbb{R}$ are contained in $X^G$. Moreover, if $X^G$ is a singleton and $u$ is an exhaustion function, then $u$ has a global minimum as its unique critical point.
\end{proposition}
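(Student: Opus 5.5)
The plan is a second-order argument at a hypothetical critical point outside $X^G$. Suppose $p\in X\setminus X^G$ with $du_p=0$. I will use the infinitesimal action rather than any submanifold structure of the orbit $G\cdot p$. Let $\mathfrak{g}$ be the Lie algebra of $G$ and set
$$V_p:=\{\xi_X(p):\xi\in\mathfrak{g}\}\subseteq T_pX,$$
where $\xi_X(p)=\frac{d}{dt}\big|_{t=0}\exp(t\xi)\cdot p$. This real subspace is the tangent space of the orbit at $p$, and by hypothesis $\dim_{\mathbb{R}}V_p>n$.

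\textbf{Linear algebra.} Since $\dim_{\mathbb{R}}T_pX=2n$, a dimension count gives
$$\dim_{\mathbb{R}}(V_p\cap JV_p)\ge 2\dim V_p-2n>0.$$
So there is a nonzero $v\in V_p$ with $Jv\in V_p$. Write $v=\xi_X(p)$ and $Jv=\eta_X(p)$ for some $\xi,\eta\in\mathfrak{g}$.

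\textbf{Vanishing of the Hessian along orbit directions.} Because $du_p=0$, the real Hessian $\operatorname{Hess}_p u$ is a well-defined symmetric bilinear form on $T_pX$, independent of coordinates. For the curve $\gamma(t)=\exp(t\xi)\cdot p$, $G$-invariance makes $u(\gamma(t))$ constant. Differentiating twice at $t=0$ gives
$$0=\operatorname{Hess}_p u(\gamma'(0),\gamma'(0))+du_p(\gamma''(0))=\operatorname{Hess}_p u(v,v).$$
The same argument with $\eta$ gives $\operatorname{Hess}_p u(Jv,Jv)=0$.

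\textbf{Contradiction with strict plurisubharmonicity.} In local holomorphic coordinates at $p$, the Levi form satisfies the identity
$$4\,\partial\bar\partial u\,(v,\bar v)=\operatorname{Hess}_p u(v,v)+\operatorname{Hess}_p u(Jv,Jv),$$
which follows by expanding both sides in real coordinates $x+iy$. The left side is therefore $0$, whereas strict plurisubharmonicity requires it to be strictly positive since $v\neq0$. This contradiction shows $\operatorname{Crit}(u)\subseteq X^G$. Only the $C^2$ regularity of $u$ and the smoothness of the action enter this argument; holomorphicity of the action is not needed beyond that.

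\textbf{Second assertion.} Suppose $X^G=\{x_0\}$ and $u$ is an exhaustion. Sublevel sets $\{u\le u(x)\}$ are compact, so $u$ attains a global minimum at some point. That point is critical, and by the first part it lies in $X^G$, so it equals $x_0$. Every critical point lies in $X^G=\{x_0\}$, so $x_0$ is the unique critical point and the global minimum.

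\textbf{Main obstacle.} The delicate step is the Levi-form/Hessian identity and its use at $p$. I must make sure that at a critical point the coordinate-dependent first-order terms drop out, so that the orbit-curve computation can be paired with the complex-coordinate Levi form. I also need to record that $v\neq0$ is guaranteed by the dimension count and not by any freeness of the action.
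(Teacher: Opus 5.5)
Your proposal is correct and follows the paper's proof essentially step for step: the dimension count producing a nonzero $v$ with $v,Jv$ tangent to the orbit, the vanishing of the (well-defined) Hessian along orbit curves at a critical point, the identity $L_u(v)=\tfrac14\bigl(H_u(v,v)+H_u(Jv,Jv)\bigr)$ contradicting strict plurisubharmonicity, and the exhaustion argument for the second assertion. Your only deviations are cosmetic: you use the infinitesimal action $\xi\mapsto\xi_X(p)$ and the curves $\exp(t\xi)\cdot p$ instead of the immersed orbit, and you correctly observe that holomorphicity of the action is never actually used.
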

\begin{proof}
Let $x \in X \setminus X^G$ be a critical point of $u$. Thus, the differential vanishes at $x$, namely $du_x = 0$. Denote the orbit of $x$ under the action of $G$ by $M := G \cdot x$. Because the action of the Lie group $G$ is smooth, the orbit $M$ is an immersed smooth submanifold of $X$. 

Since $u$ is $G$-invariant, we have the identity $u(g(y)) = u(y)$ for all $y \in X$ and $g \in G$. Differentiating then yields $du_{g(x)} \circ dg_x = du_x$. Because $dg_x$ is an isomorphism of $\mathbb{C}$-vector spaces and $du_x = 0$, it follows that $du_{g(x)} = 0$ for all $g \in G$. Hence, the entire orbit $M$ is contained within the critical locus of $u$.

Let $T_x M$ denote the real tangent space to $M$ at $x$. By hypothesis, the real dimension satisfies $k := \dim_{\mathbb{R}}(T_x M) > n$. Let $J$ denote the complex structure on the ambient real tangent space $T_x X$. We define the maximal complex subspace of $T_x M$ as
\[ W := T_x M \cap J(T_x M). \]
By the dimension formula for real vector spaces, we have
\[ \dim_{\mathbb{R}} (W) = \dim_{\mathbb{R}}(T_x M) + \dim_{\mathbb{R}}(J(T_x M)) - \dim_{\mathbb{R}}(T_x M + J(T_x M)). \]
Since the sum $T_x M + J(T_x M)$ is a subspace of $T_x X$, its real dimension is bounded above by $\dim_{\mathbb{R}}(T_x X) = 2n$. Therefore, we obtain the inequality
\[ \dim_{\mathbb{R}}(W) \ge 2k - 2n. \]
Because $k > n$, it follows that $\dim_{\mathbb{R}} (W) > 0$. Consequently, there exists a non-zero tangent vector $v \in T_x M$ such that $Jv \in T_x M$.

Because $x$ is a critical point, the Hessian $H_u:=\operatorname{Hess}(u)$ is a well-defined symmetric bilinear form on $T_x X$. For any tangent vector $w \in T_x M$, there exists a smooth curve $\gamma\colon [-\varepsilon, \varepsilon] \rightarrow M$ such that $\gamma(0) = x$ and $\dot{\gamma}(0) = w$. Since $u$ evaluates to a constant on the orbit $M$, the composition $u \circ \gamma$ is constant. Differentiating twice with respect to $t$ gives
\[ 0 = \left.\frac{d^2}{dt^2}u(\gamma(t))\right|_{t=0} = H_u(w, w) + du_x(\ddot{\gamma}(0)). \]
Since $du_x = 0$, we conclude that $H_u(w, w) = 0$ for all $w \in T_x M$.

At a critical point, the Levi form $L_u$ of $u$ evaluated at a tangent vector $v$ is proportional to the sum of the real Hessians along $v$ and $Jv$. Specifically,
\[ L_u(v) = \frac{1}{4}H_u(v, v) + \frac{1}{4}H_u(Jv, Jv). \]
Because both $v$ and $Jv$ are vectors in $T_x M$, we have $L_u(v) = 0 + 0 = 0$. However, $u$ is strictly plurisubharmonic, which requires $L_u(v) > 0$ for all non-zero $v \in T_x X$. This yields a contradiction. Therefore, no critical points can exist in $X \setminus X^G$.

To establish the second assertion, assume $X^G = \{a\}$ is a singleton. By the first part, any critical point of $u$ must coincide with $a$. Because $X$ is Stein of dimension $n \ge 1$, it is a complex manifold. Since $u$ is an exhaustion function, for any constant $c \in \mathbb{R}$, the sublevel set $\{y \in X \mid u(y) \le c\}$ is compact. This properness guarantees that $u$ is bounded from below and attains a global minimum on $X$. The point at which the global minimum is attained must necessarily be a critical point. Since the critical locus is a subset of $\{a\}$, the unique global minimum must be exactly $a$, and $u$ admits no other critical points.
\end{proof}

The second assertion of Proposition~\ref{Optimization} requires the fixed-point set to be a singleton. This geometric condition can be inherently enforced by the topological structure of the ambient manifold. Specifically, for torus actions, P.\ A.\ Smith theory \cite{Smith} bounds the Betti numbers of the fixed-point set by those of the manifold. By imposing a homological constraint on the ambient space, we can rigorously force the discrete fixed-point set to be exactly one point, thereby guaranteeing the uniqueness of the global minimum.

\begin{corollary}\label{TorusUniqueness}
Let $X$ be a Stein manifold of $\dim_\mathbb{C}(X)=n$ and $\chi(X)\leq1$, equipped with a torus $\mathbb{T}^d$-action, and let $u\in C^2(X)$ be a $\mathbb{T}^d$-invariant strictly plurisubharmonic exhaustion function. Suppose that the fixed-point set $A$ of the $\mathbb{T}^d$-action on $X$ is discrete, and the orbit of every point $x\in X\setminus A$ has real dimension greater than $n$. Then, in fact $\chi(X)=1$, and $u$ has a global minimum as its unique critical point.  
\end{corollary}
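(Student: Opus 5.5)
Our plan is to obtain the singleton fixed-point set from a Morse-theoretic Euler characteristic count, and then apply Proposition~\ref{Optimization}.

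\textbf{Step 1: critical points lie in $A$ and there is at least one.} The torus $\mathbb{T}^d$ is a Lie group acting holomorphically, and every orbit off $A$ has real dimension greater than $n$. The first assertion of Proposition~\ref{Optimization} therefore confines all critical points of $u$ to $A$. Since $u$ is a $C^2$ exhaustion function, its sublevel sets are compact, so $u$ attains a global minimum. Hence $A\neq\emptyset$.

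\textbf{Step 2: compute $\chi(X)$ from the fixed points.} We aim to show $\chi(X)=|A|$. There are two routes.

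The first route is the fixed-point localization $\chi(X)=\chi(X^{\mathbb{T}^d})$ for torus actions. This is the Smith-theory / Lefschetz type statement the paper alludes to. It applies after exhausting $X$ by the compact invariant sublevel sets $\{u\le c\}$, where $c$ is a regular value. Since $A$ is discrete and the critical points lie in $A$, the critical set of $u$ is discrete. So $X$ deformation retracts onto $\{u\le c\}$ for $c$ large, and these sets are compact manifolds with boundary. On such a set the localization gives $\chi=\#(A\cap\{u\le c\})$, which is finite.

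The second route, which I would use if the first has gaps, is Morse theory. Each fixed point $a\in A$ is a critical point of $u$, since the $\mathbb{T}^d$-invariant function $u$ has differential at $a$ fixed by the linear isotropy representation, whose fixed vectors are only $0$ on the complement of fixed directions (and $a$ is isolated). Rather than perturbing, I would argue directly that each critical point is a nondegenerate minimum of index $0$. At $a$ the isotropy representation of $\mathbb{T}^d$ on $T_aX$ is complex linear with no nonzero fixed vectors, because $a$ is isolated. The invariant Hessian $H_u$ at $a$ is then a sum over the weight spaces $V_\lambda$. On a nontrivial character space the circle acts by complex scalars, so an invariant real quadratic form restricted there is $J$-invariant. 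Its value on $v$ then equals the Levi form $L_u(v)$ up to a positive factor, which is positive. So $a$ is a nondegenerate local minimum.

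With every critical point of index $0$, Morse theory on the exhaustion $u$ gives $\chi(X)=|A|$, by counting the $0$-cells attached.

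\textbf{Step 3: conclude.} We have $|A|\ge1$ and $\chi(X)=|A|\le 1$, so $|A|=1$ and $\chi(X)=1$. Moreover, $X$ is connected: a connected manifold with all critical points index-$0$ minima is built from exactly $|A|$ cells, so it must be a single cell. The second assertion of Proposition~\ref{Optimization} then gives that $u$ has a unique critical point, namely its global minimum.

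\textbf{Main obstacle.} I expect the hardest step to be justifying $\chi(X)=|A|$ on a noncompact $X$ with possibly infinite $A$. The key points are the $J$-invariance of the invariant Hessian on the nontrivial weight spaces, and finiteness of $A\cap\{u\le c\}$, which follows from discreteness of $A$ and compactness of the sublevel sets. Once every fixed point is known to be an index-$0$ nondegenerate minimum, finiteness of $\chi$ is automatic. The hypothesis $\chi(X)\le1$ then forces a single minimum, because two minima would produce $\chi\ge2$ (absent index-$1$ critical points, sublevel sets cannot reconnect).
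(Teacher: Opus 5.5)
\textbf{Main gap (Route 2).} Your Step 3 and your ``main obstacle'' discussion rest on Route 2, and its key lemma is false: an isolated fixed point of a holomorphic torus action need not be a local minimum of an invariant strictly plurisubharmonic function. Your argument is correct on each single weight space $V_\lambda$: there $J$ is realized by a group element, so the Hessian is a positive multiple of the Levi form. But the claim that the invariant Hessian is a sum over the weight spaces is wrong. The spaces $V_\lambda$ and $V_{-\lambda}$ are isomorphic as real representations. An invariant real quadratic form can couple them through the real part of an invariant \emph{holomorphic} quadratic form, which is anti-invariant under $J$ and invisible to the Levi form.

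Concretely, let $S^1$ act on $\mathbb{C}^2$ by $t\cdot(z,w)=(tz,t^{-1}w)$, and let $u=|z|^2+|w|^2+3\operatorname{Re}(zw)+|z|^4+|w|^4$.
\begin{itemize}
\item $u$ is invariant, and its Levi matrix is $\operatorname{diag}(1+4|z|^2,1+4|w|^2)$, so it is strictly plurisubharmonic.
\item It is an exhaustion, since $3|\operatorname{Re}(zw)|\le\frac32(|z|^2+|w|^2)$ and $|z|^4+|w|^4\ge\frac12(|z|^2+|w|^2)^2$.
\item The origin is the only fixed point.
\item The quadratic part $|z|^2+|w|^2+3\operatorname{Re}(zw)$ has signature $(2,2)$, so the origin is a nondegenerate critical point of index $2$.
\end{itemize}
Your Route 2 never uses the orbit-dimension hypothesis, so it would apply verbatim to this example. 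Hence the Morse count $\chi(X)=|A|$ and the ``single cell'' conclusion are unsupported.

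\textbf{Route 1.} This is the paper's proof. Proposition~\ref{Optimization} confines the critical points to $A$, the global minimum makes $A$ nonempty, and the localization $\chi(X)=\chi(A)$ (the paper's Borel--Smith identity) with $\chi(X)\le 1$ forces $|A|=1$. If you invoke localization directly on $X$, in its standard form for spaces with finite-dimensional rational cohomology, it also gives $\#A<\infty$. That suffices at the paper's level of rigor.

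The sublevel-set justification you add does not work. A discrete $A$ may be infinite, with points at arbitrarily large values of $u$. Then $X$ retracts onto no $\{u\le c\}$, so $\chi(\{u\le c\})=\#(A\cap\{u\le c\})$ does not compute $\chi(X)$.

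\textbf{What the orbit-dimension hypothesis means for tori.} It is the only hypothesis that could exclude the example above. Let $\xi_X$ denote the generating vector field of $\xi$ in the Lie algebra. Because $u$ is invariant, the action is holomorphic and $\mathbb{T}^d$ is abelian, the function $d^cu(\xi_X)$ is $\mathbb{T}^d$-invariant. So $0=\mathcal{L}_{\xi_X}d^cu$ gives $dd^cu(\xi_X,\eta_X)=-\eta_X\bigl(d^cu(\xi_X)\bigr)=0$. Thus every orbit is isotropic for the K\"ahler form $dd^cu$, hence totally real and of real dimension at most $n$. The hypothesis therefore forces $X=A$, which is impossible for nonempty $X$ with $n\ge 1$. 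The statement is non-vacuous only for $n=0$, a point neither you nor the paper notes.
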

\begin{proof}
By Proposition~\ref{Optimization}, the critical points of the strictly plurisubharmonic function $u$ are contained in $A$. Because $X$ is a Stein manifold and $u$ is an exhaustion function, $u$ is proper and bounded below, thus attaining a global minimum on $X$. This minimum must be a critical point, which ensures that $A$ is non-empty.

By the Borel-Smith identity \cite{Smith} for torus actions, the Euler characteristics of the fixed-point set $A$ is exactly equal to that of $X$. By hypothesis, this yields 
\begin{equation}\label{Borel-Smith_identity}
\chi(A) = \chi(X) \le 1.
\end{equation} 
Since $A$ is a discrete space, its higher Betti numbers vanish, meaning the Betti number $b_i(A) = 0$ for all $i > 0$. Consequently, inequality~(\ref{Borel-Smith_identity}) reduces to $b_0(A) \le 1$. Because $A$ is non-empty, it has at least one connected component, ensuring $b_0(A) \ge 1$. Therefore, we obtain $\chi(A)=b_0(A) = 1$, which forces the discrete set $A$ to be a singleton, and $\chi(X)=1$.

Since $A$ is a singleton and the orbit of every point $x \in X \setminus A$ has real dimension greater than $n$, the second assertion of Proposition~\ref{Optimization} applies directly. We conclude that $u$ has a global minimum as its unique critical point.
\end{proof}

Proposition~\ref{Optimization} establishes a rigorous geometric condition for exactness in plurisubharmonic optimization. By designing a highly symmetric constraint manifold, the existence of a unique global minimum is strictly guaranteed without directly computing the intrinsic Hessian or bounded second fundamental forms. The same optimization-theoretic machinery implies a fundamental differential topological obstruction for complex transformation groups. If we assume the existence of a compact Lie group action, we can invoke the Haar measure to construct an invariant exhaustion function. Applying our optimization constraints to this constructed function reveals a sharp upper bound on the allowable orbit dimensions for compact group actions on Stein manifolds.

\begin{corollary}\label{G-action}
Let $X$ be a Stein manifold of complex dimension $n$, and $G$ a compact Lie group acting holomorphically on $X$. Then, there must exist a point in $X$ of which orbit has real dimension at most $n$.
\end{corollary}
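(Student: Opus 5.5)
We argue by contradiction: we build a $G$-invariant strictly plurisubharmonic exhaustion function on $X$ and apply Proposition~\ref{Optimization}. Suppose every point of $X$ has an orbit of real dimension greater than $n$. Then every orbit has positive dimension, so there are no fixed points and $X^G=\emptyset$.

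\textbf{Step 1: an invariant exhaustion.} Since $X$ is Stein, there is a smooth strictly plurisubharmonic exhaustion function $\psi\colon X\rightarrow\mathbb{R}$, for instance $\psi=|F|^2$ for a proper holomorphic embedding $F\colon X\hookrightarrow\mathbb{C}^N$. Let $dg$ be the normalized Haar measure on the compact group $G$, and set
\[ u(x):=\int_G \psi(g\cdot x)\,dg. \]
Each $x\mapsto\psi(g\cdot x)$ is strictly plurisubharmonic, because $g$ acts by biholomorphisms. Since $G$ is compact and the action is smooth, we may differentiate under the integral sign. The Levi form of $u$ at $x$ in direction $v$ is then the average of the positive quantities $L_\psi(g\cdot x)(dg_x v)$, so it is strictly positive for $v\neq0$. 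Hence $u$ is $C^2$ (indeed smooth), $G$-invariant and strictly plurisubharmonic.

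\textbf{Step 2: properness of $u$.} Take $c\in\mathbb{R}$ and a point $x$ with $u(x)\le c$. Since $u$ is an average, some $g\in G$ satisfies $\psi(g\cdot x)\le c$. Therefore
\[ \{u\le c\}\subseteq G\cdot\{\psi\le c\}. \]
The right-hand side is the image of the compact set $G\times\{\psi\le c\}$ under the continuous action map, so it is compact. The sublevel set $\{u\le c\}$ is closed, hence compact, and $u$ is an exhaustion.

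\textbf{Step 3: conclusion.} $u$ is a continuous exhaustion on the nonempty manifold $X$, so it attains a global minimum, and this minimum is a critical point. By the first assertion of Proposition~\ref{Optimization}, every critical point lies in $X^G=\emptyset$, which is a contradiction. Therefore some point of $X$ has an orbit of real dimension at most $n$.

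The only step requiring care is Step 1: we must check that averaging over $G$ preserves strict plurisubharmonicity uniformly in $x$. This is the continuity and compactness argument sketched there. Step 2 relies only on the compactness of $G$ and is routine.
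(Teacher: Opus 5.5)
Your proposal is correct and follows the paper's proof essentially step for step. You assume all orbits have real dimension $>n$ (so $X^G=\emptyset$), average a strictly plurisubharmonic exhaustion over Haar measure, and contradict Proposition~\ref{Optimization} through the existence of a global minimum. Your inclusion $\{u\le c\}\subseteq G\cdot\{\psi\le c\}$ also makes explicit the properness of $u$, which the paper only asserts from the compactness of the orbits.
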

\begin{proof}
Assume, for the sake of contradiction, that the orbit of every point $x \in X$ under $G$ has real dimension strictly greater than $n$. Since the fixed points of the action inherently have orbit dimension $0$, this assumption implies that the fixed point set is empty, namely $X^G = \emptyset$.

Because $X$ is a Stein manifold, it admits a strictly plurisubharmonic $C^\infty$ exhaustion function $\varphi\colon X \rightarrow \mathbb{R}$. By the definition of an exhaustion, we have that $\varphi$ is proper. Since $G$ is a compact Lie group, it admits a bi-invariant Haar probability measure $\mu$. We define the Lie group averaged function $u\colon X \rightarrow \mathbb{R}$ by
\[ u(x) := \int_G \varphi(g(x)) \, d\mu(g). \]

The function $u$ is $G$-invariant by construction. Furthermore, because $G$ acts holomorphically on $X$, the pull-back of a strictly plurisubharmonic function by any group element $g$ remains strictly plurisubharmonic. Since integration with respect to a strictly positive measure preserves strict plurisubharmonicity, $u$ is a strictly plurisubharmonic $C^\infty$ function on $X$. Because the group $G$ is compact, its orbits are also compact subsets of $X$. Consequently, the averaging process preserves the properness of the original function, ensuring that $u$ remains a proper function.

By the preceding Proposition~\ref{Optimization}, the critical points of $u$ must be contained in $X^G$. Since $X^G = \emptyset$, we deduce that the function $u$ has no critical points. However, $u$ is a proper, continuous function on a smooth manifold, which implies it is bounded below and must attain a global minimum. The point attaining this global minimum is required to be a critical point, leading to a contradiction. Therefore, there must exist at least one point in $X$ whose orbit has real dimension at most $n$, as required.
\end{proof}

\begin{remark}
Corollary~\ref{G-action} demonstrates an interaction between optimization theory and holomorphic dynamics. While compact Lie groups can act freely with large orbits on general smooth manifolds, the pseudoconvexity of a Stein manifold explicitly prohibits this freedom. The complex structure forces the group action to collapse, ensuring the existence of at least one sufficiently small orbit to serve as the geometric anchor for the optimization minimum.
\end{remark}

While the preceding results establish the theoretical existence and uniqueness of the global minimum, practical continuous optimization relies on dynamical systems to actively locate this minimum. If we further assume that the objective function $u$ is real analytic, the induced K\"ahler gradient flow behaves exceptionally well. As we prove in Appendix~A, the strict plurisubharmonicity combined with the \L ojasiewicz gradient inequality guarantees that the continuous gradient flow trajectories have finite length and converge to the unique critical point, thereby validating this geometric framework for algorithmic optimization.

\section{Conclusions and Remarks}

In this paper, we have explored the pervasive phenomenon of rigidity in complex geometry through the lenses of pluripotential theory, relative morphism spaces, Kobayashi hyperbolicity, and constrained optimization. 

First, we demonstrated that strictly plurisubharmonic potentials with global $\operatorname{U}(1)$-symmetry rigorously constrain the topological degree of the reduced gradient mapping via symplectic reduction. Building upon this pluripotential rigidity, we investigated constrained optimization on Stein manifolds. We established that a holomorphic Lie group action with sufficiently large orbits rigidly confines the critical locus of a proper invariant strictly plurisubharmonic function to the fixed-point set. This geometrically guarantees a unique global minimum and yields a sharp topological obstruction for compact Lie group actions. 

Second, we proved that continuous, fiber-wise holomorphic mappings on proper fibrations elevate to global holomorphic morphisms when anchored by positive algebraic data. By exploiting the Noetherian property of local rings within relative morphism spaces, we showed that an infinite sequence of disjoint sections forces the stabilization of descending analytic chains.
This mechanism naturally yields the rigidity of homomorphisms between elliptic fibrations and Abelian schemes.

Finally, on compact Kobayashi hyperbolic manifolds, we showed that injectivity on a very ample hypersurface is sufficient to promote a continuous, fiber-wise analytic mapping of degree $1$ to a global bi-holomorphic isomorphism. Together, these theorems underscore a unifying principle: whether driven by symplectic reduction, algebraic stabilization, hyperbolic rigidity, or symmetric dimensional constraints, partial geometric anchors inevitably enforce global structural regularity.

\appendix

\section{Convergence via \L ojasiewicz's Inequality}

This appendix provides the proof of the convergence for the gradient flow associated with a real analytic strictly plurisubharmonic exhaustion function. The argument relies on the \L ojasiewicz gradient inequality to establish that the dynamical system corresponding to the optimization framework in Section~5 has trajectories of finite length, guaranteeing its convergence to a unique critical point.

\begin{proposition}
Let $X$ be a Stein manifold, and $u\in C^\infty(X)$ a strictly plurisubharmonic real analytic exhaustion function with induced K\"ahler metric $\omega:=dd^cu$. Then, for any initial value $x_0\in X$, the trajectory of the gradient flow $\dot{x}=-\nabla u(x)$ in $(X,\omega)$ has finite length, and converges to a single point in $X$.
\end{proposition}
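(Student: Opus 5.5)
The plan is the standard Łojasiewicz argument, run in three stages: confine the trajectory to a compact set, show the flow exists for all time, then bound the length. Once the length is finite, convergence to a single point follows immediately.

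\textbf{Confinement.} Let $x(t)$ be the maximal solution of $\dot x=-\nabla u(x)$, where $\nabla$ is the gradient for the Kähler metric $g$ associated to $\omega=dd^cu$. The metric is positive definite precisely because $u$ is strictly plurisubharmonic. Along the flow,
\[
\frac{d}{dt}u(x(t))=-|\nabla u(x(t))|_g^2\le 0 ,
\]
so the trajectory stays in the sublevel set $K:=\{u\le u(x_0)\}$. Since $u$ is an exhaustion, $K$ is compact. A solution of a smooth ODE that remains in a compact set cannot escape in finite time, so the flow is defined on $[0,\infty)$. Moreover $u(x(t))$ is monotone and bounded below on $K$, so it decreases to a limit $u_\infty$.

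\textbf{Limit set.} Integrating the energy identity gives $\int_0^\infty|\nabla u|^2\,dt\le u(x_0)-\min_K u<\infty$. The $\omega$-limit set $\Lambda$ is therefore nonempty, compact and connected. It is also contained in $\operatorname{Crit}(u)\cap u^{-1}(u_\infty)$: $\Lambda$ is invariant under the flow and $u\equiv u_\infty$ on $\Lambda$, which forces $\nabla u=0$ there. This is the LaSalle-type step.

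\textbf{Finite length via Łojasiewicz.} Since $u$ is real analytic and $\Lambda$ is compact, cover $\Lambda$ by finitely many neighbourhoods on which the Łojasiewicz inequality holds. Concretely, there are $\theta\in(0,\tfrac12]$, $c>0$ and a neighbourhood $V\supset\Lambda$ such that
\[
|u(y)-u_\infty|^{1-\theta}\le c\,|\nabla u(y)|_g \qquad\text{for all } y\in V.
\]
Here it is important that $u$ takes the single value $u_\infty$ on $\Lambda$, so a uniform constant is available. We must also work in a fixed background analytic metric, since $g$ is only smooth in general. On compact sets $g$ is comparable to any such metric, so the inequality transfers to the $g$-norm with a new constant.

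If $u(x(t_1))=u_\infty$ for some $t_1$, the trajectory is stationary from then on and there is nothing to prove. Otherwise, for $t$ large enough that $x(t)\in V$, set $H(t):=(u(x(t))-u_\infty)^\theta$. Then
\[
-\dot H=\theta\,(u-u_\infty)^{\theta-1}|\nabla u|^2\ge \frac{\theta}{c}\,|\nabla u|=\frac{\theta}{c}\,|\dot x|_g .
\]
The one point needing care is that the trajectory eventually stays in $V$. This follows from the distance of $x(t)$ to $\Lambda$ tending to $0$, which holds because $x(t)$ stays in the compact set $K$. Integrating then gives
\[
\int_T^\infty|\dot x|_g\,dt\le \frac{c}{\theta}\,H(T)<\infty ,
\]
so the trajectory has finite length. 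A curve of finite length in a compact region is Cauchy, so $x(t)$ converges to a single point $x_\infty$, which lies in $\Lambda\subseteq\operatorname{Crit}(u)$.

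Two remarks complete the picture. First, strict plurisubharmonicity also makes every critical point a strict local minimum in the complex sense, but that is not needed here. Second, combined with Proposition~\ref{Optimization}, in the symmetric setting this limit is the unique global minimum.

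\textbf{Expected obstacle.} The main difficulty is obtaining a uniform Łojasiewicz inequality near the whole limit set rather than near a single point. This is handled by the compactness covering together with the fact that $u$ is constant on $\Lambda$.
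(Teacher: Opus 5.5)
Your proof is correct and follows essentially the same route as the paper's. The steps match: confinement to the compact sublevel set, the $\omega$-limit set lying in $\operatorname{Crit}(u)\cap u^{-1}(u_\infty)$ by flow-invariance, and the \L ojasiewicz estimate applied to $(u-u_\infty)^{\theta}$, where the paper's exponent $\alpha$ is your $1-\theta$. Your compactness-covering argument for a uniform inequality near the whole limit set fills in a step the paper only asserts.

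Two side remarks are inaccurate, but you never use them. First, $g$ is itself real analytic because $u$ is, so no auxiliary analytic metric is needed. Second, a critical point of a strictly plurisubharmonic function can be a saddle: $|z|^2+2\operatorname{Re}(z^2)=3x^2-y^2$ on $\mathbb{C}$ is an example. So the ``strict local minimum'' remark is false, and you are right to leave it out of the argument.
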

\begin{proof}
Let $x\colon [0, \infty) \rightarrow X$ be the maximal integral curve of the gradient vector field $-\nabla u$ satisfying $x(0) = x_0$. The time derivative of $u$ along this trajectory is given by
\[ \frac{d}{dt}u(x(t)) = \langle \nabla u(x(t)), \dot{x}(t) \rangle = -\|\nabla u(x(t))\|^2 \le 0. \]
Consequently, the function $u(x(t))$ is monotonically decreasing. Because $u$ is an exhaustion function, the sublevel set 
\[ \Omega := \{ y \in X \mid u(y) \le u(x_0) \} \]
is a compact subset of $X$. Since $u(x(t)) \le u(x_0)$ for all $t$ in the domain of definition, the trajectory $x(t)$ is entirely contained within the compact set $\Omega$. This confinement guarantees that the flow exists for all time $t \ge 0$.

Because $u$ is continuous and $\Omega$ is compact, $u$ is bounded from below on $\Omega$. Thus, the monotonically decreasing limit 
\[ u_\infty := \lim_{t \to \infty} u(x(t)) \]
exists. Let $K \subset \Omega$ denote the $\omega$-limit set of the trajectory $x(t)$. Because $x(t)$ remains in the compact set $\Omega$, $K$ is non-empty, compact, and connected. Furthermore, by the continuity of $u$, we have $u(y) = u_\infty$ for all $y \in K$. 

Integrating the time derivative of $u$ from $0$ to $\infty$ yields
\[ \int_0^\infty \|\nabla u(x(t))\|^2 \, dt = u(x_0) - u_\infty < \infty. \]
This implies that $$\lim_{t \to \infty} \|\nabla u(x(t))\| = 0.$$ Since $K$ is invariant under the continuous gradient flow and $u$ evaluates to the constant $u_\infty$ on $K$, the gradient must vanish identically on $K$. Thus, $K$ is contained in the critical locus of $u$.

If there exists a time ${a} \ge 0$ such that $\nabla u(x({a})) = 0$, then $x(t) = x({a})$ for all $t \ge {a}$. In this case, the trajectory has finite length and converges to a single point. We henceforth assume that $\nabla u(x(t)) \neq 0$, and thus $u(x(t)) > u_\infty$, for all $t \ge 0$.

Because $u$ is a real analytic function and the K\"ahler metric $\omega = dd^c u$ is real analytic, the gradient vector field $\nabla u$ is also real analytic. We therefore may apply the \L ojasiewicz gradient inequality. Given the compact subset $K$ of the critical locus, on some open neighborhood $U$ of $K$ in $X$, there exist a positive real constant $C>0$ and an exponent $0< \alpha <1$ such that
\[ |u(y) - u_\infty|^\alpha \le C \|\nabla u(y)\| \]
for all $y \in U$. By shrinking $U$ if necessary, we may assume $|u(y) - u_\infty|<1$. Therefore, without loss of generality, we take $\alpha \in[1/2,1)$.

By the definition of the $\omega$-limit set, the trajectory approaches $K$, meaning there exists a time ${a} > 0$ such that $x(t) \in U$ for all $t \ge {a}$. For $t \ge {a}$, we consider the function $|u(x(t)) - u_\infty|^{1-\alpha}$. Its time derivative is evaluated as
\[ -\frac{d}{dt} |u(x(t)) - u_\infty|^{1-\alpha} = -(1-\alpha) |u(x(t)) - u_\infty|^{-\alpha} \frac{d}{dt}u(x(t)). \]
Substituting $\frac{d}{dt}u(x(t)) = -\|\nabla u(x(t))\|^2$, we obtain
\[ -\frac{d}{dt} |u(x(t)) - u_\infty|^{1-\alpha} = (1-\alpha) |u(x(t)) - u_\infty|^{-\alpha} \|\nabla u(x(t))\|^2. \]
Applying the \L ojasiewicz inequality \cite{Lojasiewicz1965}\&\cite{Lojasiewicz1984} $$|u(x(t)) - u_\infty|^{-\alpha} \ge C^{-1} \|\nabla u(x(t))\|^{-1}$$ to the first factor yields
\[ -\frac{d}{dt} |u(x(t)) - u_\infty|^{1-\alpha} \ge \frac{1-\alpha}{C} \|\nabla u(x(t))\|. \]
Since $\dot{x}(t) = -\nabla u(x(t))$, we substitute $\|\nabla u(x(t))\| = \|\dot{x}(t)\|$ and rearrange the inequality to obtain
\[ \|\dot{x}(t)\| \le -\frac{C}{1-\alpha} \frac{d}{dt} |u(x(t)) - u_\infty|^{1-\alpha}. \]
Integrating this inequality from ${a}$ to an arbitrary upper bound ${b} > {a}$ gives
\[ \int_{a}^{{b}} \|\dot{x}(t)\| \, dt \le \frac{C}{1-\alpha} ( |u(x({a})) - u_\infty|^{1-\alpha} - |u(x({b})) - u_\infty|^{1-\alpha} ). \]
Because $|u(x({b})) - u_\infty|^{1-\alpha} \ge 0$, taking the limit as ${b} \to \infty$ ensures the improper integral is bounded by
\[ \int_{a}^\infty \|\dot{x}(t)\| \, dt \le \frac{C}{1-\alpha} |u(x(t)) - u_\infty|^{1-\alpha} < \infty. \]

Since the integral of the norm of the velocity over the time interval $[0, \infty)$ is finite, the total arc length of the trajectory $x(t)$ is finite. However, any trajectory of finite length in a complete metric space must converge to a unique limit point. Therefore, we conclude that $x(t)$ converges to a single point $x_\infty \in K$ in the compact level set $\Omega$ as $t \to \infty$.
\end{proof}

\section*{Acknowledgement}

The author is deeply grateful to Weiyi Zhang for very useful and inspiring discussions.
The author expresses gratitude to the reviewers for various suggestions.
This research was completed while the author was studying at the Mathematics Institute of the University of Warwick.
The author therefore would like to thank the University of Warwick for its hospitality.

The author is in debt to the Russian mathematical society, especially, to all the professors teaching the "Math in Moscow" program, and most importantly, to the author's supervisor Alexander Petrovich Veselov at Loughborough University, as they cultivated the author's mathematical literacy and maturity.

\section*{Funding Information}

No funding was received to assist with the preparation of this manuscript, and the author did not receive support from any organization for the submitted work.

\section*{Statements and Declarations}


Data sharing is not applicable to this article as no datasets were generated or analysed during the current study.

The author hereby provides consent for the publication of the manuscript detailed above.

\bibliographystyle{plain}
\bibliography{references}

\end{document}